\newtheorem{thm}{Theorem}[section]
\newtheorem{cor}[thm]{Corollary}
\newtheorem{lem}[thm]{Lemma}
\theoremstyle{definition}
\newtheorem{defin}[thm]{Definition}
\newtheorem{rem}[thm]{Remark}
\newtheorem{exa}[thm]{Example}
\newtheorem{prop}[thm]{Proposition}
\numberwithin{equation}{section}
\def\be   {\begin{equation}}   \def\ee   {\end{equation}}
\def\ba   {\begin{array}}      \def\ea   {\end{array}}
\def\bea  {\begin{eqnarray}}   \def\eea  {\end{eqnarray}}
\def\bean {\begin{eqnarray*}}  \def\eean {\end{eqnarray*}}
\newcommand{\pre}{\mathrm{Re}}
\newcommand{\bi} {\ensuremath{{\bf i}}}
\newcommand{\bo} {\ensuremath{{\bf i_1}}}
\newcommand{\bt} {\ensuremath{{\bf i_2}}}
\newcommand {\bj}{\ensuremath{{\bf j}}}
\newcommand{\eo} {\ensuremath{{\bf e_1}}}
\newcommand{\et} {\ensuremath{{\bf e_2}}}
\newcommand{\mC}{\ensuremath{\mathbb{C}}}
\newcommand{\mR}{\ensuremath{\mathbb{R}}}
\begin{document}


\baselineskip=17pt


\title{Normal Families of Bicomplex Meromorphic Functions}

\author{Kuldeep Singh Charak\\
Department of Mathematics, University of Jammu\\
Jammu-180 006, India\\
E-mail: kscharak7@rediffmail.com
\and
Dominic Rochon\\
D\'epartement de math\'ematiques et d'informatique\\
Universit\'e du Qu\'ebec \`a Trois-Rivi\`eres\\
C.P. 500 Trois-Rivi\`eres, Qu\'ebec, Canada G9A 5H7\\
E-mail: Dominic.Rochon@UQTR.CA
\and
Narinder Sharma\\
Department of Mathematics, University of Jammu\\
Jammu-180 006, India\\
E-mail: narinder25sharma@sify.com}

\date{}

\maketitle


\renewcommand{\thefootnote}{}
\footnote{2010 \emph{Mathematics Subject Classification}: 30G, 30G35, 30D30, 32A, 32A30.}

\footnote{\emph{Key words and phrases}: Bicomplex numbers, Complex Clifford algebras, Normal families,
Bicomplex holomorphic functions, Bicomplex meromorphic functions, Bicomplex Riemann sphere.}

\renewcommand{\thefootnote}{\arabic{footnote}}
\setcounter{footnote}{0}


\begin{abstract}
In the present paper, we introduced the extended bicomplex plane $\overline{\mathbb{T}}$,
its geometric model: the bicomplex Riemann sphere, and the bicomplex chordal metric that enables
us to talk about the convergence of the sequences of bicomplex meromorphic functions. Hence
the concept of the normality of a family of bicomplex meromorphic functions on bicomplex domains emerges.
Besides obtaining a normality criterion for such families, the bicomplex analog of the Montel theorem for
meromorphic functions  and the Fundamental Normality Tests for families of bicomplex holomorphic functions
and bicomplex meromorphic functions are also obtained.
\end{abstract}

\section{Introduction}
The concept of normality of a family of bicomplex holomorphic
functions was introduced in \cite{Sharma}, and we now intend to study
the same property for a family of bicomplex meromorphic functions.
A family $\boldsymbol F$ of meromorphic functions on a domain
$D\subset\mathbb{C}$ is said to be normal in $D$ if every sequence
in $\boldsymbol F$ contains a subsequence which converges
uniformly on compact subsets of $D$; the limit function is either
meromorphic in $D$ or identically equal to $\infty$. Of course,
the convergence in this situation is with respect to the chordal
metric on the Riemann sphere $\overline {\mathbb C} =\mathbb C
\cup \{\infty\}$ (cf. \cite{Schiff}). Unfortunately, the one complex variable
case doesn't admit any simple generalization to extend facts to the bicomplex case.

In order to discuss the convergence of sequences of bicomplex
meromorphic functions on bicomplex plane domains, we introduce the
extended bicomplex plane $\overline{\mathbb{T}}$, its
geometric model viz., the bicomplex Riemann sphere,  the
bicomplex chordal metric on the bicomplex Riemann sphere,
and the idea of convergence on
$\overline{\mathbb{T}}$. In turn, these developments facilitate
the introduction of the concept of the normality of a family of
bicomplex meromorphic functions on bicomplex domains. This forms
the content of Section~3.

In Section~4 of the paper, after introducing the concept of
normality of a family of bicomplex meromorphic functions, a
normality criterion for such families, the bicomplex analog of the
Montel theorem for meromorphic functions and the Fundamental
Normality Tests for families of bicomplex holomorphic functions
and bicomplex meromorphic functions are also obtained.

\section{Preliminaries}
As in \cite{Shapiro} (see also \cite{Charak} and \cite{Sharma}), the algebra of bicomplex numbers \begin{equation} \mathbb{T}:=\{z_1+z_2\bt\ |\ z_1, z_2 \in
\mathbb{C}(\bo) \} \label{enstetra} \end{equation} is the space isomorphic to $\mathbb{R}^{4}$ via the map
$$z_1+z_2\bt =x_0+x_1\bo+x_2\bt+x_3\bj \rightarrow (x_0,x_1,x_2,x_3)\in\mathbb{R}^{4},$$
and the multiplication is defined using the following rules:
$$\bo^2=\bt^2=-1,\qquad\bo\bt=\bt\bo=\bj\quad\mbox{ so that }\quad\bj^2=1.$$
Note that we define $\mC(\bi_k):=\{x+y\bi_k\ |\ \bi_k^2= -1$
and $x,y\in \mR \}$ for $k=1,2$. Hence, it is easy to see that the multiplication of two bicomplex numbers is commutative. In fact, the bicomplex numbers $$\mathbb{T}\cong {\rm Cl}_{\Bbb{C}}(1,0) \cong {\rm Cl}_{\Bbb{C}}(0,1)$$
are \textbf{unique} among the \textbf{complex Clifford algebras} (see \cite{Brackx, Delanghe} and \cite{Ryan})
in that they are commutative but not division algebra.
Also, since the map $z_1+z_2\bt \rightarrow (z_1,z_2)$ gives a natural isomorphism between the $\mathbb{C}$-vector spaces $\mathbb{T}$ and
$\mathbb{C}^{2}$, we have $\mathbb{T}=\mathbb{C}\otimes_{\mathbb{R}}\mathbb{C}$. That is, we can view the algebra $\mathbb{T}$ as the complexified $\mathbb{C}(\bo)$ exactly the way $\mathbb{C}$ is complexified $\mathbb{R}$.
In particular, in the equation (\ref{enstetra}), if we put $z_1=x$ and
$z_2=y\bo$ with $x,y \in \mathbb{R}$, then we obtain the following
subalgebra of hyperbolic numbers, also called duplex numbers (see, e.g. \cite{Shapiro}, \cite{Sobczyk}):
$$\mathbb{D}:=\{x+y\bj\ |\ \bj^2=1,\ x,y\in \mathbb{R}\}\cong {\rm Cl}_{\Bbb{R}}(0,1).$$

\noindent The two projection maps $\mathcal{P}_1,\mathcal{P}_2:\mathbb{T}\longrightarrow\mathbb{C}(\bold{i_1})$ defined by
\begin{equation}\label{projections}
\mathcal{P}_1(z_1+z_2\bold{i_2})=z_1-z_2\bold{i_1}\qquad\mbox{ and }\qquad\mathcal{P}_2(z_1+z_2\bold{i_2})=z_1+z_2\bold{i_1},
\end{equation}
are used extensively in the sequel.

The complex (square) norm $CN(w)$ of the bicomplex number
$w$ is the complex number ${z_1}^2 +{z_2}^2$; writing
$w^*=z_1-z_2\bt$, we see that $CN(w)=ww^*$. Then a bicomplex
number $w=z_1+z_2\bt$ is invertible if and only if $CN(w)\neq 0$.
Precisely, \begin{equation*} w^{-1}= \displaystyle \frac{w^*}{CN(w)}. \end{equation*} The
set of units in the algebra $\mathbb{T}$ forms a multiplicative
group which we shall denote by $\mathbb{T}_*$ (see \cite{Baird}). Unlike the algebra
$\mathbb{C}$, the bicomplex algebra $\mathbb{T}$  has zero
divisors given by \begin{equation*} \mathcal{NC}=\{w \in \mathbb{T}:
CN(w)=0\}=\{z(1\pm\bj)|\ z\in \mathbb{C}(\bo)\}, \end{equation*}
which we may call the {\em null-cone}. Note that, using orthogonal
idempotents $$\bold{e_1}=\frac{1+\bold{j}}{2},\qquad
\bold{e_2}=\frac{1-\bold{j}}{2},\ \text{ in }\ \mathcal{NC},$$
each bicomplex number  $w=z_1+z_2\bt \in \mathbb{T}$ can be
expressed uniquely as \begin{equation*}
w=\mathcal{P}_1(w)\eo+\mathcal{P}_2(w)\et, \end{equation*} where
$\mathcal{P}_1$ and $\mathcal{P}_2$ are projection maps defined in
$(\ref{projections})$. This representation of $\mathbb{T}$ as
$\mathbb{C}\oplus\mathbb{C}$ helps to do addition, multiplication
and division term-by-term. With this representation we can
directly express $|w|_j$ as
$$|w|_j := |\mathcal{P}_1(w)|\eo+|\mathcal{P}_2(w)|\et$$
and will be referred to as the
$\bold{j}$\textsl{\textbf{-modulus}} of
$w=z_1+z_2\bt\in\mathbb{T}$ (see \cite{Shapiro}).
\begin{defin} Let $X_1$ and $X_2$ be subsets of $\mathbb{C}(\bold{i_1})$. Then the following set
$$X_{1}\times_e X_{2}:=\{w=z_1+z_2\bold{i_2}\in\mathbb{T}~:~\mathcal{P}_1(w)\in X_1\ \text{ and }\ \mathcal{P}_2(w)\in X_2\}$$
is called a $\mathbb{T}$\textsl{\textbf{-cartesian set}} determined by $X_1$ and $X_2$, where $\mathcal{P}_1$ and $\mathcal{P}_2$ are projections as defined in \eqref{projections}.
\end{defin}
\noindent It is easy to see that if $X_1$ and $X_2$ are domains (open and connected) of $\mathbb{C}(\bold{i_1})$ then $X_1\times_e X_2$ is also a domain of $\mathbb{T}$. We define the ``discus" with center $a=a_1+a_2\bt$ of radius $r_1$ and $r_2$ of $\mathbb{T}$ as follows \cite{Price}:
\small
\begin{eqnarray*}
D(a;r_1,r_2)&=& B^{1}(a_1-a_2\bo,r_1)\times_{e}B^{1}(a_1+a_2\bo,r_2)\\
&=& \{w_1\bold{e_1}+w_2\bold{e_2}: |w_1-(a_1-a_2\bo)|<r_1,|w_2-(a_1+a_2\bo)|<r_2\},
\end{eqnarray*}
\normalsize
where $B^{n}(z,r)$ is an open ball with center $z\in\mathbb{C}^n(\bold{i_1})$ and radius $r>0$. In the particular case where $r=r_1=r_2$, $D(a;r,r)$ will be called the $\mathbb{T}$-disc with center $a$ and radius $r$. In particular, we define $$\overline{D}(a;r_1,r_2):=\overline{B^{1}(a_1-a_2\bo,r_1)}\times_{e}\overline{B^{1}(a_1+a_2\bo,r_2)}\subset \overline{D(a;r_1,r_2)}.$$ We remark that $D(0;r,r)$ is, in fact, the \textbf{Lie Ball} (see \cite{Avanissian}) of radius $r$ in $\mathbb{T}$.

Further, the projections as defined in \eqref{projections}, help to understand bicomplex holomorphic functions in
terms of the following Ringleb's Decomposition Lemma \cite{Riley}.

\begin{thm}\label{theo5}
Let $\Omega\subset\mathbb{T}$ be an open set. A function $f:\Omega\longrightarrow\mathbb{T}$ is $\mathbb{T}$-holomorphic
on $\Omega$ if and only if the two natural functions $f_{e1}:\mathcal{P}_1(\Omega)\longrightarrow\mathbb{C}(\bold{i_1})$ and
$f_{e2}:\mathcal{P}_2(\Omega)\longrightarrow\mathbb{C}(\bold{i_1})$ are holomorphic, and
$$f(w)=f_{e1}(\mathcal{P}_1(w))\eo+f_{e2}(\mathcal{P}_2(w))\et,\ \forall\mbox{ }w=z_1+z_2\bold{i_2}\in \Omega,$$
\end{thm}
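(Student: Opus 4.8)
The plan is to prove Ringleb's Decomposition Lemma by transferring the notion of $\mathbb{T}$-holomorphy through the idempotent decomposition $w=\mathcal{P}_1(w)\eo+\mathcal{P}_2(w)\et$, using the fact that $\eo,\et$ are orthogonal idempotents with $\eo\et=0$, $\eo+\et=1$. First I would fix the definition of $\mathbb{T}$-holomorphy being used: $f$ is $\mathbb{T}$-holomorphic on the open set $\Omega$ if $f$ has, at each point $w_0\in\Omega$, a bicomplex derivative $f'(w_0)=\lim_{h\to 0,\ h\in\mathbb{T}_*}(f(w_0+h)-f(w_0))h^{-1}$, where the limit is taken through invertible increments. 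Writing $f=g\eo+h\et$ with $g,h$ taking values in $\mathbb{C}(\bo)$, and writing any increment as $k=k_1\eo+k_2\et$ (invertible iff $k_1\neq 0$ and $k_2\neq 0$), the difference quotient splits as
\begin{equation*}
\frac{f(w_0+k)-f(w_0)}{k}=\frac{g(\mathcal{P}_1(w_0)+k_1)-g(\mathcal{P}_1(w_0))}{k_1}\,\eo+\frac{h(\mathcal{P}_2(w_0)+k_2)-h(\mathcal{P}_2(w_0))}{k_2}\,\et,
\end{equation*}
because multiplication and inversion are componentwise in the $\eo,\et$ basis.

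The forward direction then goes as follows. Since $\mathcal{P}_1,\mathcal{P}_2$ are $\mathbb{C}(\bo)$-linear continuous open maps, $\mathcal{P}_1(\Omega)$ and $\mathcal{P}_2(\Omega)$ are open in $\mathbb{C}(\bo)$; I would define $f_{e1}:=g\circ(\text{section})$ more carefully by noting that $g$ restricted to a slice is the right object — concretely $f_{e1}(\zeta):=\mathcal{P}_1(f(\zeta\eo+\eta\et))$ for any $\eta$, and check this is independent of $\eta$ using that the $\eo$-component of $f(w_0+k)$ with $k=k_1\eo$ (so $k_2=0$, increment not invertible) must still behave correctly — here one passes to the limit along invertible $k$ with $k_2\to 0$ and uses continuity. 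Once $f_{e1}$ and $f_{e2}$ are well defined as functions on $\mathcal{P}_1(\Omega)$ and $\mathcal{P}_2(\Omega)$, the displayed difference-quotient identity shows that the one-dimensional complex limits defining $f_{e1}'(\mathcal{P}_1(w_0))$ and $f_{e2}'(\mathcal{P}_2(w_0))$ exist (take $k=k_1\eo$ in the limit, respectively $k=k_2\et$, approximating by invertible increments), so $f_{e1},f_{e2}$ are holomorphic in the usual sense, and by construction $f(w)=f_{e1}(\mathcal{P}_1(w))\eo+f_{e2}(\mathcal{P}_2(w))\et$.

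For the converse, given holomorphic $f_{e1}:\mathcal{P}_1(\Omega)\to\mathbb{C}(\bo)$ and $f_{e2}:\mathcal{P}_2(\Omega)\to\mathbb{C}(\bo)$, define $f$ by the displayed formula; it is well defined on $\Omega$ since $w\in\Omega$ forces $\mathcal{P}_1(w)\in\mathcal{P}_1(\Omega)$ and $\mathcal{P}_2(w)\in\mathcal{P}_2(\Omega)$. For $w_0\in\Omega$ and invertible $k=k_1\eo+k_2\et$ with $k_1,k_2$ small, the difference-quotient identity above rewrites the quotient as
\begin{equation*}
\frac{f(w_0+k)-f(w_0)}{k}=\left(f_{e1}'(\mathcal{P}_1(w_0))+o(1)\right)\eo+\left(f_{e2}'(\mathcal{P}_2(w_0))+o(1)\right)\et,
\end{equation*}
and since $|k|\to 0$ forces $|k_1|,|k_2|\to 0$, the limit exists and equals $f_{e1}'(\mathcal{P}_1(w_0))\eo+f_{e2}'(\mathcal{P}_2(w_0))\et$; hence $f$ is $\mathbb{T}$-holomorphic on $\Omega$. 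The one genuine subtlety — the main obstacle — is the well-definedness of $f_{e1}$ and $f_{e2}$ in the forward direction: a priori $f$ is only defined on $\Omega$, which need not be a $\mathbb{T}$-cartesian product, so the naive "restrict to a coordinate slice" does not immediately make sense, and one must argue that the $\eo$-component of $f$ at $w$ depends only on $\mathcal{P}_1(w)$. This is exactly where differentiability (not mere continuity) is needed: varying $w$ in the $\et$-direction is varying along a non-invertible increment, along which the existence of the bicomplex derivative forces the $\eo$-component of the difference quotient to vanish in the limit, i.e. the $\eo$-component is locally constant in the $\et$-direction, and a connectedness argument on the slices of $\Omega$ (using that $\Omega$ open implies each nonempty fibre $\mathcal{P}_1^{-1}(\zeta)\cap\Omega$ has the relevant local structure) upgrades this to global independence. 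I would cite \cite{Riley} for the original argument and present the idempotent computation as the streamlined proof.
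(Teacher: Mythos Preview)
The paper does not actually give a proof of this theorem; it is quoted as Ringleb's Decomposition Lemma with a citation to \cite{Riley}, so there is no argument in the paper to compare your proposal against.

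On the merits of your outline: the converse direction is correct, and the idempotent difference-quotient computation is the right engine. There are two issues in the forward direction. First, your displayed identity writes $g(\mathcal{P}_1(w_0)+k_1)$, but at that stage $g=\mathcal{P}_1\circ f$ is only a function on $\Omega\subset\mathbb{T}$; the argument $\mathcal{P}_1(w_0)+k_1$ lives in $\mathbb{C}(\bo)$ and does not yet make sense as an input to $g$. The honest split is $\frac{g(w_0+k)-g(w_0)}{k_1}\,\eo+\frac{h(w_0+k)-h(w_0)}{k_2}\,\et$, and the work is precisely to show that $g$ depends only on $\mathcal{P}_1(w)$. Second, and more seriously, your proposed resolution via ``a connectedness argument on the slices of $\Omega$'' does not go through for a general open set: the fibres $\mathcal{P}_1^{-1}(\zeta)\cap\Omega$ need not be connected. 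Indeed, take $\Omega=D_1\cup D_2$ a disjoint union of two $\mathbb{T}$-discs with overlapping $\mathcal{P}_1$-projections but disjoint $\mathcal{P}_2$-projections, and set $f\equiv 1$ on $D_1$, $f\equiv 2$ on $D_2$; then $f$ is $\mathbb{T}$-holomorphic but no global $f_{e1}$ exists on $\mathcal{P}_1(\Omega)$. The decomposition is really a local statement (equivalently, a statement for $\mathbb{T}$-cartesian open sets, as in \cite{Price}), and that is how it is proved and used in the references. Your local mechanism---pass to the real differential, note that $f'(w_0)\cdot\et$ has zero $\eo$-component, hence $g$ is locally constant along $\et$-slices---is exactly right on any $\mathbb{T}$-disc contained in $\Omega$; you should state the result at that level of generality rather than try to globalize.
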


The Ringleb's Lemma for bicomplex meromorphic functions is as follows \cite{Charak}.
\begin{thm}\label{mero1} Let $\Omega\subset\mathbb{T}$ be an open set. A function $f:\Omega\longrightarrow\mathbb{T}$ is bicomplex meromorphic on $\Omega$ if and only if the two natural functions $f_{e1}:\mathcal{P}_1(\Omega)\longrightarrow\mathbb{C}(\bold{i_1})$ and $f_{e2}:\mathcal{P}_2(\Omega)\longrightarrow\mathbb{C}(\bold{i_1})$ are meromorphic, and
$$f(w)=f_{e1}(\mathcal{P}_1(w))\eo+f_{e2}(\mathcal{P}_2(w))\et,\ \forall\mbox{ }w=z_1+z_2\bold{i_2}\in \Omega.$$
\end{thm}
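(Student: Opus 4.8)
The plan is to reduce everything to the already-established holomorphic case, Theorem~\ref{theo5}, by using the idempotent decomposition together with the classical fact that a meromorphic function is exactly one that is locally the quotient of two holomorphic functions (equivalently, holomorphic into the Riemann sphere with the poles removed). So the first step is to fix the working definition: $f:\Omega\to\mathbb{T}$ is \emph{bicomplex meromorphic} on $\Omega$ if every point of $\Omega$ has a $\mathbb{T}$-cartesian neighbourhood $U = U_1\times_e U_2$ on which $f$ can be written as $g\cdot h^{-1}$ for $\mathbb{T}$-holomorphic $g,h$ on $U$ with $h$ not identically a zero divisor on either idempotent slice (so that $h^{-1}$ makes sense off a thin set); alternatively one states it via a notion of bicomplex pole. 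Either way, I would first record that under the isomorphism $\mathbb{T}\cong\mathbb{C}(\bold{i_1})\oplus\mathbb{C}(\bold{i_1})$ the operations of inversion, products and quotients are performed slicewise, i.e. $(g h^{-1})(w) = g_{e1}(\mathcal{P}_1(w))h_{e1}(\mathcal{P}_1(w))^{-1}\bold{e_1} + g_{e2}(\mathcal{P}_2(w))h_{e2}(\mathcal{P}_2(w))^{-1}\bold{e_2}$, which is immediate from $\bold{e_1}\bold{e_2}=0$, $\bold{e_1}^2=\bold{e_1}$, $\bold{e_2}^2=\bold{e_2}$.

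For the forward direction, suppose $f$ is bicomplex meromorphic on $\Omega$. Apply Theorem~\ref{theo5} to the holomorphic pieces $g,h$ of a local representation to get $g_{e1},g_{e2},h_{e1},h_{e2}$ holomorphic on the respective projections; then $f_{e1} = g_{e1}/h_{e1}$ and $f_{e2} = g_{e2}/h_{e2}$ are locally quotients of holomorphic functions on $\mathcal{P}_1(\Omega)$ and $\mathcal{P}_2(\Omega)$, hence classically meromorphic there, and the displayed decomposition $f(w)=f_{e1}(\mathcal{P}_1(w))\bold{e_1}+f_{e2}(\mathcal{P}_2(w))\bold{e_2}$ holds by the slicewise computation above. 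One must check that the local representations glue: since $\mathcal{P}_1(\Omega)$ and $\mathcal{P}_2(\Omega)$ are open in $\mathbb{C}(\bold{i_1})$ (they are open because $\mathcal{P}_1,\mathcal{P}_2$ are $\mathbb{R}$-linear surjections, hence open maps) and meromorphy is a local property, the functions $f_{e1},f_{e2}$ are well-defined and meromorphic globally.

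For the converse, suppose $f_{e1}$ on $\mathcal{P}_1(\Omega)$ and $f_{e2}$ on $\mathcal{P}_2(\Omega)$ are meromorphic and $f(w) = f_{e1}(\mathcal{P}_1(w))\bold{e_1}+f_{e2}(\mathcal{P}_2(w))\bold{e_2}$. Near any $w_0\in\Omega$, choose a $\mathbb{T}$-cartesian neighbourhood $U_1\times_e U_2$ on which $f_{e1}=p_1/q_1$ and $f_{e2}=p_2/q_2$ with $p_k,q_k$ holomorphic and $q_k\not\equiv 0$. Set $g(w) = p_1(\mathcal{P}_1(w))\bold{e_1}+p_2(\mathcal{P}_2(w))\bold{e_2}$ and $h(w) = q_1(\mathcal{P}_1(w))\bold{e_1}+q_2(\mathcal{P}_2(w))\bold{e_2}$; by Theorem~\ref{theo5} these are $\mathbb{T}$-holomorphic on $U_1\times_e U_2$, and the slicewise arithmetic shows $f = g h^{-1}$ off the set where $h$ is a zero divisor, so $f$ is bicomplex meromorphic there. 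Again locality lets these local representations suffice to conclude $f$ is bicomplex meromorphic on all of $\Omega$.

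The main obstacle is essentially bookkeeping about the definition of "bicomplex meromorphic" rather than a deep argument: one has to pin down precisely what is allowed at bicomplex poles (where $h$ hits the null-cone $\mathcal{NC}$, the pole set need not be a discrete subset of $\mathbb{T}$ but is a $\mathbb{T}$-cartesian union of slicewise-discrete sets) and verify that this matches the slicewise description, so that the statement "$f_{e1},f_{e2}$ meromorphic" captures exactly the intended class. Once the definitions are aligned, the proof is a direct transcription of Theorem~\ref{theo5} through the quotient/local-representation characterization of meromorphy, using only that $\mathcal{P}_1,\mathcal{P}_2$ are open maps and that arithmetic in $\mathbb{T}$ is performed slicewise in the idempotent basis.
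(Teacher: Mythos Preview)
The paper does not actually prove this theorem: it is stated in the Preliminaries section as the Ringleb Lemma for bicomplex meromorphic functions and attributed to the reference \cite{Charak}, with no proof given. So there is no in-paper argument to compare your proposal against.

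That said, your approach is the natural one and almost certainly matches the original: reduce to Theorem~\ref{theo5} via the local-quotient characterization of meromorphy, exploiting that arithmetic in $\mathbb{T}$ is slicewise in the idempotent basis. One point deserves a bit more care than you give it. In the forward direction you need the locally defined pieces $f_{e1}$ to patch into a single function on all of $\mathcal{P}_1(\Omega)$, but two points $w,w'\in\Omega$ with $\mathcal{P}_1(w)=\mathcal{P}_1(w')$ need not lie in a common $\mathbb{T}$-cartesian neighbourhood contained in $\Omega$, so agreement of the local $g_{e1}/h_{e1}$ representations is not automatic from overlap arguments alone. This is exactly the issue you flag as ``bookkeeping about the definition'': in \cite{Charak} the definition of bicomplex meromorphic is set up (essentially via the pole notion recorded here as Definition~\ref{pole}) so that the idempotent components are well-defined from the outset, which dissolves the gluing problem. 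Once that is pinned down, your sketch goes through without further difficulty.
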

\begin{defin} Let $f:\Omega\longrightarrow\mathbb{T}$ be a bicomplex meromorphic
function on the open set $\Omega\subset\mathbb{T}$, and let $f_{e1}:\mathcal{P}_1(\Omega)\longrightarrow\mathbb{C}(\bo)$ and $f_{e2}:\mathcal{P}_2(\Omega)\longrightarrow\mathbb{C}(\bo)$ be the natural maps. Then we say that $w=\mathcal{P}_1(w)\eo+\mathcal{P}_2(w)\et\in \Omega$ is a (strong) \textsl{\textbf{pole}} for the bicomplex meromorphic function
$$f(w)=f_{e1}\mathcal{P}_1(w)\eo+f_{e1}\mathcal{P}_2(w)\et$$
if $\mathcal{P}_1(w)$ (and) or $\mathcal{P}_2(w)$ is a pole for
$f_{e1}$ or $f_{e2}$, respectively. \label{pole}
\end{defin}

\begin{rem}
Poles of bicomplex meromorphic functions are not isolated singularities.
\end{rem}

\noindent It is also easy to obtain the following characterization
of poles.

\begin{prop}
Let $f:X\longrightarrow\mathbb{T}$ be a bicomplex meromorphic
function on the open set $\Omega\subset\mathbb{T}$. If $w_0\in \Omega$ then $w_0$ is a pole of $f$ if and only if
$$\mathop {\lim }\limits_{w \to w_0 } \left| {f(w)} \right| = \infty .$$
\end{prop}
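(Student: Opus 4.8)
The plan is to transfer the statement to one complex variable by means of the Ringleb decomposition (Theorem~\ref{mero1}) together with the fact that, on the finite-dimensional space $\mathbb{T}\cong\mathbb{R}^4$, the Euclidean norm $|\cdot|$ is comparable to the two idempotent components. First I would record the elementary identity $|w|^2=\tfrac12\bigl(|\mathcal{P}_1(w)|^2+|\mathcal{P}_2(w)|^2\bigr)$ for every $w=z_1+z_2\bt\in\mathbb{T}$ (obtained by writing everything in real coordinates; in fact only the two-sided bound $c_1\bigl(|\mathcal{P}_1(w)|^2+|\mathcal{P}_2(w)|^2\bigr)\le|w|^2\le c_2\bigl(|\mathcal{P}_1(w)|^2+|\mathcal{P}_2(w)|^2\bigr)$ with fixed constants $c_1,c_2>0$ is used below), together with the companion Lipschitz estimate $|\mathcal{P}_k(w)-\mathcal{P}_k(w_0)|\le\sqrt2\,|w-w_0|$ for $k=1,2$. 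By Theorem~\ref{mero1} we may write $f(w)=f_{e1}(\mathcal{P}_1(w))\eo+f_{e2}(\mathcal{P}_2(w))\et$ with $f_{e1},f_{e2}$ meromorphic on the open sets $\mathcal{P}_1(\Omega),\mathcal{P}_2(\Omega)$; hence, at every point $w$ where $f$ is finite-valued, $|f(w)|^2=\tfrac12\bigl(|f_{e1}(\mathcal{P}_1(w))|^2+|f_{e2}(\mathcal{P}_2(w))|^2\bigr)$.

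Suppose first that $w_0$ is a pole of $f$. By Definition~\ref{pole} we may assume that $u_1:=\mathcal{P}_1(w_0)$ is a pole of $f_{e1}$ (the case where $\mathcal{P}_2(w_0)$ is a pole of $f_{e2}$ is symmetric, and if both occur the argument only gets easier). Since poles of a one-variable meromorphic function are isolated, $f_{e1}$ is holomorphic on a punctured disc about $u_1$ contained in $\mathcal{P}_1(\Omega)$, and the one-variable counterpart of the present statement gives $|f_{e1}(\zeta)|\to\infty$ as $\zeta\to u_1$. Now let $w\to w_0$ through the set on which $f$ is finite. Any such $w$ must satisfy $\mathcal{P}_1(w)\ne u_1$ --- otherwise $f_{e1}(\mathcal{P}_1(w))$ would be undefined --- while the Lipschitz estimate forces $\mathcal{P}_1(w)\to u_1$; hence $|f_{e1}(\mathcal{P}_1(w))|\to\infty$, and from $|f(w)|^2\ge\tfrac12|f_{e1}(\mathcal{P}_1(w))|^2$ we obtain $\lim_{w\to w_0}|f(w)|=\infty$. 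The one point that needs genuine attention is exactly this: approaching $w_0$ along the domain of $f$ automatically keeps $\mathcal{P}_1(w)$ away from the pole $u_1$, so that the classical one-variable limit really does apply; everything else is routine.

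For the converse I would argue by contraposition. If $w_0$ is not a pole of $f$, then by Definition~\ref{pole} $\mathcal{P}_1(w_0)$ is not a pole of $f_{e1}$ and $\mathcal{P}_2(w_0)$ is not a pole of $f_{e2}$, so $f_{e1}$ and $f_{e2}$ are holomorphic near $\mathcal{P}_1(w_0)$ and $\mathcal{P}_2(w_0)$ respectively; by Theorem~\ref{theo5}, $f$ is then $\mathbb{T}$-holomorphic, in particular continuous, in a neighbourhood of $w_0$, whence $\lim_{w\to w_0}|f(w)|=|f(w_0)|<\infty$, which rules out the limit being $\infty$. Combining the two implications yields the stated equivalence. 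I expect no real obstacle here: the only subtlety is the domain bookkeeping in the forward direction, and it is settled by the isolatedness of the poles.
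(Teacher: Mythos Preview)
Your argument is correct. The paper does not actually supply a proof of this proposition; it merely prefaces it with ``It is also easy to obtain the following characterization of poles'' and moves on. Your route---reducing to the classical one-variable pole characterization via the idempotent decomposition of Theorem~\ref{mero1}, together with the norm identity $|w|^2=\tfrac12\bigl(|\mathcal{P}_1(w)|^2+|\mathcal{P}_2(w)|^2\bigr)$---is exactly the natural way to fill in the omitted details, and your handling of the domain bookkeeping (that approaching $w_0$ through points where $f$ is finite keeps $\mathcal{P}_1(w)$ off the pole $u_1$) correctly addresses the only point requiring care.
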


\noindent A classical example of bicomplex meromorphic function is the \textbf{bicomplex Riemann zeta} function introduced by Rochon in \cite{Rochon1}.

\section {The Extended Bicomplex Plane $\overline{\mathbb{T}}$}

Since the range of bicomplex meromorphic function lies beyond the
bicomplex plane, we need the {\bf extended bicomplex plane} to
study the bicomplex meromorphic functions. Further, it would help
to study the limit points of unbounded sets in bicomplex plane. We
obtain this extended bicomplex plane by using extended $\mathbb
\mC(\bo)$-plane.

For, we may consider the set
\small
\begin{eqnarray*}
\overline{\mC(\bo)} \times_{e} \overline{\mC(\bo)} &=& \left(\mC(\bo) \cup \{ \infty \} \right)\times_{e}\left( \mC(\bo)\cup \{ \infty \}\right)\\
&=& \left(\mC(\bo) \times_{e} \mC(\bo)\right) \cup \left(\mC(\bo) \times_{e} \{ \infty \} \right)\cup \left(\{ \infty \} \times_{e} \mC(\bo)\right) \cup \{ \infty \}\\
&=& {\mathbb{T}} \cup {I_{\infty}},
\end{eqnarray*}
\normalsize
writing $I_{\infty}$ for the set $\left({\mC(\bo) \times_{e} \{ \infty \}}\right) \cup \left({\{ \infty \} \times_{e} \mC(\bo)}\right) \cup {\{ \infty \}}$. Clearly, any unbounded sequence in $\mathbb{T}$ will have a limit point in $I_{\infty}$.
\begin{defin} The set $\overline{\mathbb{T}}=\overline{\mC(\bo)} \times_{e} \overline{\mC(\bo)}$ is called the \textbf{extended bicomplex plane}. That is, $$\overline{\mathbb{T}} = \mathbb{T} \cup I_{\infty},\quad\mbox{ with }\quad I_{\infty} = \{w \in \overline{\mathbb{T}} : \left\|w\right\| = \infty \}.$$
\end{defin}

It is of significant importance to observe that formation of the extended bicomplex plane $\overline{\mathbb{T}}$ requires us to add an infinity set viz. $I_{\infty}$, which we may call the \textbf{bicomplex infinity set}.

We need some definitions in order to give a characterization of
this set.
\begin{defin}An element $w \in I_{\infty}$ is said to be a $\mathcal{P}_1$-infinity ($\mathcal{P}_2$-infinity) element if
$\mathcal{P}_{1}(w)= \infty \ (\mathcal{P}_{2}(w)= \infty)$ and $\mathcal{P}_{2}(w)\neq \infty \ (\mathcal{P}_{1}(w)\neq \infty).$
\end{defin}

\begin{defin}The set of all $\mathcal{P}_1$-infinity elements is called the $I_1$\textsl{\textbf{-infinity set}}. It is denoted by $I_{1,\infty}$. Therefore,
$$I_{1,\infty} = \{w \in \overline{\mathbb{T}} : \mathcal{P}_{1}(w)= \infty ,\\\ \mathcal{P}_{2}(w)\neq \infty \}.$$ Similarly we can define the $I_2$\textsl{\textbf{-infinity set}} as:
$$I_{2,\infty} = \{w \in \overline{\mathbb{T}} : \mathcal{P}_{1}(w) \neq \infty ,\\\ \mathcal{P}_{2}(w)= \infty \}.$$
\end{defin}

\begin{defin}
An element $w \in \overline{\mathbb{T}}$ is said to be a $\mathcal{P}_1$-zero ($\mathcal{P}_2$-zero)
element if $\mathcal{P}_{1}(w)= 0 \  (\mathcal{P}_{2}(w)= 0)$ and $\mathcal{P}_{2}(w)\neq 0 \ (\mathcal{P}_{1}(w)\neq 0).$
\end{defin}

\begin{defin} The set of all $\mathcal{P}_1$-zero elements is called the $I_1$\textsl{\textbf{-zero set}}; it is denoted by $I_{1,0}$. That is, $I_{1,0} = \{w \in \overline{\mathbb{T}}: \mathcal{P}_{1}(w)= 0 ,\mathcal{P}_{2}(w)\neq 0 \}$. Similarly, we may define the $I_2$\textsl{\textbf{-zero set}} as the set $\{w \in \overline{\mathbb{T}}: \mathcal{P}_{1}(w) \neq 0 ,\mathcal{P}_{2}(w)= 0 \}$.
\end{defin}

We now construct the following two new sets:
\begin{equation*}
I^{-}_{\infty} = I_{1,\infty} \cup I_{2,\infty},\qquad I^{-}_{0} = I_{1,0} \cup I_{2,0},
\end{equation*}
so that $I_{\infty} = I^{-}_{\infty} \cup \{ \infty \}$ and $\mathcal{NC} = I^{-}_{0} \cup \{0\}$.
With these definitions, each element in the null-cone has an
inverse in $I_{\infty}$ and vice versa. One can easily check that
the elements of the set $I^{-}_{\infty}$ do not satisfy all the
properties as satisfied by the $\mC(\bo)$-infinity but the element
$\infty = \infty \eo + \infty \et $ does. We may call the set
$I^{-}_{\infty}$, the \textbf{weak bicomplex infinity set} and the
element $ \infty = \infty \eo + \infty \et $, the \textbf{strong
infinity}. This nature of the set $ I_{\infty}$ generates the idea
of weak and strong poles for bicomplex meromorphic functions (see
\cite{Charak}). Now, in order to work in the extended bicomplex plane,
it is desirable to have a geometric model wherein the elements of
$\overline{\mathbb{T}}$ have a concrete representative so as to
treat the points of $I_{\infty}$ as good as any other point of
$\overline{\mathbb{T}}$. To obtain such a model, one can use the
usual stereographic projections of $\overline{\mC(\bo)}$ as two
components in the  idempotent decomposition to get a one-to-one
and onto correspondence between the points of $S \times S$, where
$S$ is the unit sphere in $\mathbb{R}^{3}$, and
$\overline{\mathbb{T}}$. Hence, we can visualize the extended
bicomplex plane directly in $\mathbb{R}^{6}=\mathbb{R}^{3} \times
\mathbb {R}^{3}$. With this representation, we call
$\overline{\mathbb{T}} $ the \textbf{bicomplex Riemann sphere}.

Observe that what is done above is basically a compactification of
$\mathbb{C}^2$, using bicomplex setting. That is, suitable points
at infinity are added to $\mathbb{T}$ to get the extended
bicomplex plane $\overline{\mathbb{T}}$. In higher dimensions such
compactifications are well known under the name, conformal
compactifications. In fact, such compactifications are obtained as
homogeneous spaces of Lie groups (see \cite{Baird} and \cite{Baston}).

\subsection{The Chordal Metric on $\overline{\mathbb{T}}$}
To initiate a study of normal families of bicomplex meromorphic functions, we first have to extend the chordal distance to the extended bicomplex plane in such a way that facilitates introduction of notions like convergence of sequences and continuity of bicomplex meromorphic functions. The chordal metric on $\overline{\mC(\bo)}$ can be used to define a distance on $\overline{\mathbb T}.$

\begin{prop}
If $\chi : \overline{\mC(\bo)} \times \overline{\mC(\bo)}\longrightarrow \mathbb{R}$ be the chordal metric on
$\overline{\mC(\bo)}$. Then the mapping $ \mathbb \chi_e  :  \overline{\mathbb{T}}\times \overline{\mathbb{T}}\longrightarrow \mathbb R \ \ \text{defined as:}$
\begin{equation*}
\mathbb \chi_e\left({z,w}\right) =\sqrt{\frac{\chi^{2}(\mathcal{P}_{1}(z),\mathcal{P}_{1}(w)) + \chi^{2}(\mathcal{P}_{2}(z),\mathcal{P}_{2}(w))}{2}}
\end{equation*}
is a metric on $\overline{\mathbb{T}}.$
\end{prop}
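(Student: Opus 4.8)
The plan is to verify the three metric axioms for $\chi_e$ by reducing each one to the corresponding known property of the chordal metric $\chi$ on $\overline{\mC(\bo)}$, applied to the two projections $\mathcal{P}_1$ and $\mathcal{P}_2$. First I would observe that $\chi_e(z,w)$ is well-defined and nonnegative since it is the square root of an average of two nonnegative reals. For the identity of indiscernibles, note that $\chi_e(z,w)=0$ holds if and only if $\chi^2(\mathcal{P}_1(z),\mathcal{P}_1(w))+\chi^2(\mathcal{P}_2(z),\mathcal{P}_2(w))=0$, which, since both terms are nonnegative and $\chi$ is a metric, forces $\mathcal{P}_1(z)=\mathcal{P}_1(w)$ and $\mathcal{P}_2(z)=\mathcal{P}_2(w)$; because the idempotent decomposition $w=\mathcal{P}_1(w)\eo+\mathcal{P}_2(w)\et$ is unique (as recalled in Section~2, extended here to $\overline{\mathbb{T}}=\overline{\mC(\bo)}\times_e\overline{\mC(\bo)}$), this is equivalent to $z=w$. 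Symmetry is immediate from the symmetry of $\chi$ in each summand.

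The only axiom requiring real work is the triangle inequality: for $z,w,u\in\overline{\mathbb{T}}$ I must show $\chi_e(z,u)\le\chi_e(z,w)+\chi_e(w,u)$. Writing $a_k=\chi(\mathcal{P}_k(z),\mathcal{P}_k(w))$ and $b_k=\chi(\mathcal{P}_k(w),\mathcal{P}_k(u))$ for $k=1,2$, the triangle inequality for $\chi$ on $\overline{\mC(\bo)}$ gives $\chi(\mathcal{P}_k(z),\mathcal{P}_k(u))\le a_k+b_k$. So it suffices to prove the purely elementary inequality
\begin{equation*}
\sqrt{\frac{(a_1+b_1)^2+(a_2+b_2)^2}{2}}\le\sqrt{\frac{a_1^2+a_2^2}{2}}+\sqrt{\frac{b_1^2+b_2^2}{2}}
\end{equation*}
for nonnegative reals $a_k,b_k$, together with the monotonicity of $t\mapsto\sqrt{t}$ to pass from the bound $\chi(\mathcal{P}_k(z),\mathcal{P}_k(u))\le a_k+b_k$ to the left-hand side. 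This is just the triangle inequality for the Euclidean norm on $\mathbb{R}^2$ applied to the vectors $(a_1,a_2)$ and $(b_1,b_2)$ (the factor $1/\sqrt{2}$ scales both sides equally), and it follows from the Cauchy--Schwarz inequality in the standard way.

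The step I expect to be the mild obstacle — more a matter of care than of difficulty — is the handling of the projections on the extended plane: one must make sure $\mathcal{P}_1,\mathcal{P}_2$ are understood as maps $\overline{\mathbb{T}}\to\overline{\mC(\bo)}$ (with $\mathcal{P}_k(w)=\infty$ allowed), that the decomposition $w=\mathcal{P}_1(w)\eo+\mathcal{P}_2(w)\et$ and its uniqueness still make sense for points of $I_\infty$ under the identification $\overline{\mathbb{T}}=\overline{\mC(\bo)}\times_e\overline{\mC(\bo)}$, and that $\chi$ is finite-valued on all of $\overline{\mC(\bo)}\times\overline{\mC(\bo)}$ (so $\chi_e$ never takes the value $\infty$ and the inequalities above are between finite quantities). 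Once these bookkeeping points are in place, the three axioms follow as above, completing the proof that $\chi_e$ is a metric on $\overline{\mathbb{T}}$.
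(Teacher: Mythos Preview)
Your proof is correct and follows essentially the same approach as the paper: both verify the first three axioms directly from the corresponding properties of $\chi$, then derive the triangle inequality by applying the triangle inequality for $\chi$ componentwise and finishing with Minkowski's inequality for the Euclidean norm on $\mathbb{R}^2$ (which is exactly your Cauchy--Schwarz step). Your additional remarks about extending $\mathcal{P}_1,\mathcal{P}_2$ to $\overline{\mathbb{T}}$ and the finiteness of $\chi$ are sound bookkeeping that the paper leaves implicit.
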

\begin{proof}
It is easy to verify that $\forall z,w\in\overline{\mathbb{T}}$ we have:
\begin{equation*}
\mathbb \chi_e\left({z,w}\right)\geq 0;
\end{equation*}
\begin{equation*}
\mathbb \chi_e\left({z,w}\right) = 0  \ \ \text{iff } z = w;
\end{equation*}
\begin{equation*}
\mathbb \chi_e\left({z,w}\right)= \mathbb \chi_e\left({w,z}\right).
\end{equation*}
Now, we show that $\mathbb \chi$ also satisfies the triangle inequality. Let $ z, \ w \ ,\ v \in \overline{\mathbb{T}} .$  \ We have to show that
$$\mathbb \chi_e\left({z,w}\right)\leq \mathbb \chi_e\left({z,v}\right)+ \mathbb \chi_e\left({v,w}\right).$$\\ For this,
$\mathbb \chi_e\left({z,w}\right) = \sqrt{\displaystyle\frac{\chi^{2}\left({\mathcal{P}_{1}(z),\mathcal{P}_{1}(w)}\right) + \chi^{2}\left({\mathcal{P}_{2}(z),\mathcal{P}_{2}(w)}\right)}{2}}$\\

\noindent
$\leq\sqrt{\frac { \left\{{\chi\left({\mathcal{P}_{1}(z),\mathcal{P}_{1}(v)}\right) + \chi\left({\mathcal{P}_{1}(v),\mathcal{P}_{1}(w)}\right)}\right\} ^{2} + \left\{{\chi\left({\mathcal{P}_{2}(z),\mathcal{P}_{2}(v)}\right) + \chi\left({\mathcal{P}_{2}(v),\mathcal{P}_{2}(w)}\right) }\right\}^{2} }{2}}.$

\noindent Now, using Minkowski's inequality in the above inequality, we obtain that
$$ \leq \sqrt{\frac {\chi^{2}\left({\mathcal{P}_{1}(z),\mathcal{P}_{1}(v)}\right) + \chi^{2}\left({\mathcal{P}_{2}(z),\mathcal{P}_{2}(v)}\right)}{2}}$$
$$ + \sqrt{\frac {\chi^{2}\left({\mathcal{P}_{1}(v),\mathcal{P}_{1}(w)}\right) + \chi^{2}\left({\mathcal{P}_{2}(v),\mathcal{P}_{2}(w)}\right)}{2}}$$
$$ = \mathbb \chi_e\left({z,v}\right)+ \mathbb \chi_e\left({v,w}\right).$$
Hence,
$\mathbb \chi_e$
is a metric on $\overline{\mathbb{T}}$.
\end{proof}

\medskip
We call this metric $\mathbb \chi_e$ on $\overline{\mathbb{T}}$ the \textbf{bicomplex chordal metric}. The virtue of the
bicomplex chordal metric is that it allows $w\in I_{\infty}$ to be treated like any other point. Hence, we are able now to analyse the
behavior of the bicomplex meromorphic functions in the extended bicomplex plane, especially on the set $I_{\infty}$.
\begin{rem} As for the $\bj -modulus$, let us define
\begin{equation*}
{\chi_{\bj}}(z, w):=\chi(\mathcal{P}_1(z),\mathcal{P}_1(w))\eo +\chi(\mathcal{P}_2(z), \mathcal{P}_2(w))\et
\end{equation*}
in the extended hyperbolic numbers. Then
$$\pre({\chi_{\bj}}^{2}(z, w))={\chi_{e}}^2(z, w)$$
and thus we have
\begin{equation*}
\chi_{e}(z, w)=\sqrt{\pre({\chi_{\bj}}^{2}(z, w))}
\end{equation*}
where
\begin{equation*}
\mathbb \chi_{\bj}(z,w)=\frac{\left|z-w\right|_{\bj}}{\sqrt{1+\left|z\right|_{\bj}}\sqrt{1+\left|w\right|_{\bj}}}\mbox{ if } z,w\in\mathbb{T}.
\end{equation*}
\end{rem}
Some of the important properties of the bicomplex chordal metric are discussed in the following results.
\begin{thm} If  $ z = z_{1}\eo + z_{2}\et \  \text{and} \ w = w_{1}\eo + w_{2}\et
$ are any two elements in the extended bicomplex plane and  $\mathbb \chi_e$ is the bicomplex chordal metric on $\overline{\mathbb{T}}$. Then,

\bigskip
   $ 1.\ \mathbb \chi_e(z,w)  \leq 1 $;
\bigskip

   $ 2.\ \mathbb \chi_e(0,\infty) =  1 $;
\bigskip

   $ 3.\ \mathbb \chi_e(z,w)= \frac{1}{\sqrt{2}}{\chi(z_{1},\infty)}\  \text{if } \mathcal{P}_{2}(z)=\mathcal{P}_{2}(w)= 0 \ \text{and } \mathcal{P}_{1}(w)= \infty$;
\bigskip

   $ 4.\ \mathbb \chi_e(z,w)= \frac{1}{\sqrt{2}}\chi(z_{1},w_{1}) \ \text{if } \mathcal{P}_{2}(z)=\mathcal{P}_{2}(w)= \infty$;
\bigskip

   $ 5.\ \mathbb \chi_e(z,\infty)= \frac{1}{\sqrt{2}}\chi(z_{2},\infty)\ \text{if } \mathcal{P}_{1}(z)=\infty$;
\bigskip

   $ 6.\  \mathbb \chi_e(z,w) =  \mathbb \chi_e(z^{-1},w^{-1})$;
\bigskip

   $ 7.\ \mathbb \chi_e(z,w)=\chi(z,w)$ if $z,w\in\overline{\mC(\bo)}$;
\bigskip

   $ 8.\ \mathbb \chi_e(z,w) \leq \left\|z-w\right\|$ if $z,w\in\mathbb{T}$;
\bigskip

   $9.\ \mathbb \chi_e(z,w)$ is a continuous function on $\mathbb{T}$.

\end{thm}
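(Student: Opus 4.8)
The plan is to settle all nine items by pushing each one through the idempotent decomposition $w=\mathcal{P}_1(w)\eo+\mathcal{P}_2(w)\et$ down to the corresponding, classical and well known, fact about the chordal metric $\chi$ on $\overline{\mC(\bo)}$. The one-variable facts I will invoke are: for $a,b\in\overline{\mC(\bo)}$ one has $0\le\chi(a,b)\le 1$, $\chi(0,\infty)=1$, $\chi(a,b)=\chi(a^{-1},b^{-1})$, and $\chi(a,b)\le|a-b|$ whenever $a,b\in\mC(\bo)$. I will also use repeatedly that an element $a\in\overline{\mC(\bo)}$, viewed inside $\overline{\mT}$, satisfies $\mathcal{P}_1(a)=\mathcal{P}_2(a)=a$ (in particular $\mathcal{P}_1(\infty)=\mathcal{P}_2(\infty)=\infty$ and $\mathcal{P}_1(0)=\mathcal{P}_2(0)=0$), and that $\mathcal{P}_1,\mathcal{P}_2$ are $\mC(\bo)$-linear on $\mT$.

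Items $1$--$5$ and $7$ are immediate from the definition of $\chi_e$ once these facts are plugged in. For $1$: $\chi_e^2(z,w)=\tfrac12\bigl(\chi^2(\mathcal{P}_1(z),\mathcal{P}_1(w))+\chi^2(\mathcal{P}_2(z),\mathcal{P}_2(w))\bigr)\le\tfrac12(1+1)=1$. For $2$: both projections of $0$ are $0$ and both of $\infty$ are $\infty$, so $\chi_e(0,\infty)=\sqrt{\tfrac12(\chi^2(0,\infty)+\chi^2(0,\infty))}=1$. Items $3$, $4$, $5$ are the same computation with one summand in the numerator equal to $\chi^2(0,0)=0$ (item $3$) or $\chi^2(\infty,\infty)=0$ (items $4$ and $5$); in each case the surviving summand is $\tfrac12\chi^2$ of the stated pair and a square root finishes it. Item $7$: when $z,w\in\overline{\mC(\bo)}$ both summands equal $\chi^2(z,w)$, so $\chi_e(z,w)=\chi(z,w)$.

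Item $6$ uses the fact, recorded in Section~3, that inversion extends to all of $\overline{\mT}$ (interchanging the null-cone and $I_\infty$) and acts componentwise, $\mathcal{P}_k(z^{-1})=\bigl(\mathcal{P}_k(z)\bigr)^{-1}$ for $k=1,2$, where on the right $0^{-1}=\infty$ and $\infty^{-1}=0$ in $\overline{\mC(\bo)}$; applying $\chi(a,b)=\chi(a^{-1},b^{-1})$ in each idempotent slot then gives $\chi_e(z^{-1},w^{-1})=\chi_e(z,w)$. Item $8$ rests on the identity $\|z\|^2=\tfrac12\bigl(|\mathcal{P}_1(z)|^2+|\mathcal{P}_2(z)|^2\bigr)$ for $z\in\mT$ (which one either cites from the standard references on bicomplex numbers, or checks in one line by writing $z=z_1+z_2\bt$ and computing $|z_1\mp z_2\bo|^2$); since $\mathcal{P}_k(z)-\mathcal{P}_k(w)=\mathcal{P}_k(z-w)$ and $\chi(a,b)\le|a-b|$ on $\mC(\bo)$, this gives
\begin{equation*}
2\,\chi_e^2(z,w)\le|\mathcal{P}_1(z-w)|^2+|\mathcal{P}_2(z-w)|^2=2\,\|z-w\|^2 .
\end{equation*}
Finally, item $9$ follows from item $8$ together with the triangle inequality already established for $\chi_e$: $|\chi_e(z,w)-\chi_e(z_0,w)|\le\chi_e(z,z_0)\le\|z-z_0\|$, so $\chi_e$ is $1$-Lipschitz, hence continuous, in each variable on $\mT$, and the same estimate applied in both slots gives joint continuity on $\mT\times\mT$.

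The step I expect to need the most care is item $6$: one must make sure $z^{-1}$ is read in the extended sense of Section~3 so that "inversion acts componentwise" holds literally, including the degenerate cases where $\mathcal{P}_k(z)$ is $0$ or $\infty$. The only other non-formal ingredient is the norm identity underlying item $8$; everything else is a direct substitution into the definition of $\chi_e$ combined with elementary properties of the one-variable chordal metric.
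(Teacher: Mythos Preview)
Your proof is correct, and in fact the paper states this theorem without proof: it lists the nine properties and moves on directly to examples showing that $\|z\|\le\|w\|$ need not imply $\chi_e(0,z)\le\chi_e(0,w)$. Your approach---reducing each item componentwise through the idempotent decomposition to the corresponding classical fact about $\chi$ on $\overline{\mC(\bo)}$---is precisely the argument the authors evidently intend the reader to supply, and your handling of the two places requiring genuine input (the extended inversion for item~6 and the norm identity $\|z\|^2=\tfrac12(|\mathcal{P}_1(z)|^2+|\mathcal{P}_2(z)|^2)$ for item~8) is sound.
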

The following implication
\begin{equation*}
\left\|z\right\|\leq\left\|w\right\|\Longrightarrow\mathbb\chi(0,z)\leq\mathbb\chi(0,w),\ z,w\in\overline{\mC(\bo)}
\end{equation*}
need not be true in case of the bicomplex chordal metric $\mathbb \chi_e$ on $\overline{\mathbb{T}}$. To support our argument we give the following examples.

\begin{exa} Let $$ z = (1+2\bold{i_{1}})\eo+(2+3\bold{i_{1}})\et \ \text{and} \ \
 w = (1+\bold{i_{1}})\eo+(3+3\bold{i_{1}})\et.$$
Then, $\left\|z\right\| \leq   \left\|w\right\|$, but $\mathbb\chi_e(0,z) = \sqrt{0.88}$ and $\mathbb\chi_e(0,w) = \sqrt{0.80}$ implies that $\chi_e(0,z) > \mathbb\chi_e(0,w)$.
\end{exa}
\begin{exa}Let $$ z = (4+\bold{i_{1}})\eo+(2+3\bold{i_{1}})\et \ \text{and } \\
w = (1+2\bold{i_{1}})\eo+(3+4\bold{i_{1}})\et.$$
Then, $\left\|z\right\| =  \left\|w\right\|$, but $\mathbb\chi_e(0,z) = \sqrt{0.93}$ and $\mathbb\chi_e(0,w) = \sqrt{0.89}$ implies that $\chi_e(0,z) > \mathbb\chi_e(0,w)$.
\end{exa}
However, we can prove the following result.

\begin{prop}Let $z,w\in\mathbb{T}$. If $\left\|z\right\| \leq   \left\|w\right\|$ then
\begin{equation*}
\chi_e(0,z) \leq  \chi_e(0,\sqrt{2}\left\|w\right\|).
\end{equation*}
\end{prop}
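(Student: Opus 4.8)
The plan is to reduce everything to the one–variable chordal metric $\chi$ on $\overline{\mC(\bo)}$ and exploit the monotonicity of $\chi(0,\cdot)$, which \emph{does} hold on $\overline{\mC(\bo)}$: if $|a|\le|b|$ then $\chi(0,a)\le\chi(0,b)$, since $\chi(0,a)=|a|/\sqrt{1+|a|^2}$ and $t\mapsto t/\sqrt{1+t^2}$ is increasing on $[0,\infty)$. Writing $z=z_1\eo+z_2\et$ with $z_i=\mathcal{P}_i(z)$, and similarly for $w$, the definition of $\chi_e$ gives
$$\chi_e^2(0,z)=\frac{\chi^2(0,z_1)+\chi^2(0,z_2)}{2}=\frac12\left(\frac{|z_1|^2}{1+|z_1|^2}+\frac{|z_2|^2}{1+|z_2|^2}\right).$$
So the first step is simply to record this explicit formula for $\chi_e(0,z)$ in terms of $|z_1|,|z_2|$, and likewise note that $\sqrt{2}\|w\|$ is a \emph{real} number, hence lies in $\overline{\mC(\bo)}$, so that by property~7 of the preceding theorem $\chi_e(0,\sqrt{2}\|w\|)=\chi(0,\sqrt{2}\|w\|)=\dfrac{\sqrt{2}\|w\|}{\sqrt{1+2\|w\|^2}}$.

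The second step is to relate $|z_1|,|z_2|$ to $\|w\|$. Recall $\|z\|^2=\tfrac12(|z_1|^2+|z_2|^2)$ (the Euclidean norm in the idempotent coordinates), so the hypothesis $\|z\|\le\|w\|$ yields $|z_1|^2+|z_2|^2\le 2\|w\|^2$. In particular each of $|z_1|^2$ and $|z_2|^2$ is at most $2\|w\|^2$, i.e. $|z_i|\le\sqrt{2}\,\|w\|$ for $i=1,2$. By the monotonicity of $\chi(0,\cdot)$ noted above, $\chi(0,z_i)\le\chi(0,\sqrt{2}\|w\|)$ for each $i$. Plugging this into the formula from the first step,
$$\chi_e^2(0,z)=\frac{\chi^2(0,z_1)+\chi^2(0,z_2)}{2}\le\frac{\chi^2(0,\sqrt2\|w\|)+\chi^2(0,\sqrt2\|w\|)}{2}=\chi^2(0,\sqrt2\|w\|),$$
and taking square roots gives $\chi_e(0,z)\le\chi_e(0,\sqrt2\|w\|)$, which is the claim.

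I do not expect a genuine obstacle here; the only point requiring a little care is the bookkeeping of constants — making sure the normalization $\|z\|^2=\tfrac12(|z_1|^2+|z_2|^2)$ used in the definition of $\chi_e$ is applied consistently, so that the crude bound $|z_i|\le\sqrt2\|w\|$ really does follow, and that the right-hand side of the statement is interpreted as $\chi_e$ evaluated at the \emph{scalar} (hence complex) point $\sqrt2\|w\|\eo+\sqrt2\|w\|\et=\sqrt2\|w\|$ so that property~7 applies. If one wanted a slightly sharper statement one could instead bound by $\chi_e(0,\|w\|_{\bj}\,\text{-type object})$, but the stated inequality with the $\sqrt2$ is exactly what drops out of this elementary per-component comparison, and that is all that is needed for the normal-family applications in the next section.
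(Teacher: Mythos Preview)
Your proof is correct and follows essentially the same route as the paper's: both write out $\chi_e(0,z)$ via the idempotent components, use $|\mathcal{P}_i(z)|\le\sqrt{2}\,\|z\|\le\sqrt{2}\,\|w\|$, apply the monotonicity of $t\mapsto t/\sqrt{1+t^2}$ to get $\chi(0,\mathcal{P}_i(z))\le\chi(0,\sqrt{2}\,\|w\|)$, and average. Your explicit invocation of property~7 to identify $\chi_e(0,\sqrt{2}\,\|w\|)$ with $\chi(0,\sqrt{2}\,\|w\|)$ is a nice touch that the paper leaves implicit.
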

\begin{proof} By definition,
$$ \mathbb \chi_e(0,z) =   \sqrt { \frac{\chi^{2}(0,\mathcal{P}_{1}(z)) + \chi^{2}(0,\mathcal{P}_{2}(z))}{2}}$$
\begin{equation*}
= \sqrt{ \frac{1}{2}\left\{ \frac{\left|\mathcal{P}_{1}(z)\right|^2}{1+\left|\mathcal{P}_{1}(z)\right|^2} +
\frac{\left|\mathcal{P}_{2}(z)\right|^2}{1+\left|\mathcal{P}_{2}(z)\right|^2}\right\}}.
\end{equation*}
Since,
$$\left|\mathcal{P}_{i}(z)\right|\leq \sqrt{2}\left\|z\right\|\leq \sqrt{2}\left\|w\right\|\mbox{ for }i=1,2$$
then
$$\chi(0,\mathcal{P}_{i}(z))=\chi(0,\left|\mathcal{P}_{i}(z)\right|) \leq  \chi(0,\sqrt{2}\left\|w\right\|)\mbox{ for }i=1,2.$$
Hence,
$$\chi_e(0,z) \leq \chi_e(0,\sqrt{2}\left\|w\right\|).$$
\end{proof}
\subsection{Convergence in $\overline{\mathbb{T}}$}
\begin{defin}
A sequence of functions $\{f_n\}$ converges \textbf{bispherically uniformly} to a function  $f$ on a set $E\subset\mathbb{T}$ if, for any
$\epsilon>0$, there is a number $n_0$ such that $n\geq n_0$ implies
$$\chi_e(f_n(w),f(w))<\epsilon,$$
for all $w\in E$.
\end{defin}

Note that if $\{f_n\}$ converges uniformly to $f$ on $E\subset\mathbb{T}$, then it also converges spherically uniformly to $f$
on $E$. The converse holds if the limit function is bounded.
\begin{lem} $ \mathbb \chi_e(z,w) \geq\displaystyle\frac{\left\|z-w\right\|}{\sqrt{1+2\left\|z\right\|^2}\sqrt{1+2\left\|w\right\|^2}}$, if $z,w\in\mathbb{T}.$
\label{lemmaineq}
\end{lem}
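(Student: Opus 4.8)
The plan is to reduce the bicomplex inequality to the corresponding one-variable chordal estimate component by component, and then combine the two idempotent pieces by an elementary arithmetic argument. First I would write $z = z_1\eo + z_2\et$ and $w = w_1\eo + w_2\et$ with $z_i = \mathcal{P}_i(z)$, $w_i = \mathcal{P}_i(w)$, and recall the standard one-complex-variable fact that
$$\chi(z_i,w_i) = \frac{|z_i - w_i|}{\sqrt{1+|z_i|^2}\sqrt{1+|w_i|^2}}.$$
Since $|z_i| = |\mathcal{P}_i(z)| \le \sqrt{2}\,\|z\|$ (the inequality already used in the proof of the previous proposition), we get $1 + |z_i|^2 \le 1 + 2\|z\|^2$, and likewise for $w$. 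Hence for each $i$,
$$\chi^2(z_i,w_i) \ge \frac{|z_i - w_i|^2}{(1+2\|z\|^2)(1+2\|w\|^2)}.$$

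Next I would sum over $i=1,2$ and divide by $2$, using the definition of $\chi_e$, to obtain
$$\chi_e^2(z,w) = \frac{\chi^2(z_1,w_1) + \chi^2(z_2,w_2)}{2} \ge \frac{|z_1-w_1|^2 + |z_2-w_2|^2}{2(1+2\|z\|^2)(1+2\|w\|^2)}.$$
The remaining task is to identify the numerator with $\|z-w\|^2$ up to the factor of $2$. Here one uses that $z - w = (z_1-w_1)\eo + (z_2-w_2)\et$, and that the Euclidean norm $\|\cdot\|$ on $\mathbb{T}$, expressed in the idempotent coordinates, satisfies $\|a\eo + b\et\|^2 = \tfrac{1}{2}(|a|^2 + |b|^2)$ (this is exactly the relation underlying $\chi_e = \sqrt{\mathrm{Re}(\chi_{\bj}^2)}$ and the normalization $\|w\| = |w|_{\bj}$ convention used throughout). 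Substituting gives $|z_1-w_1|^2 + |z_2-w_2|^2 = 2\|z-w\|^2$, so the bound becomes
$$\chi_e^2(z,w) \ge \frac{2\|z-w\|^2}{2(1+2\|z\|^2)(1+2\|w\|^2)} = \frac{\|z-w\|^2}{(1+2\|z\|^2)(1+2\|w\|^2)},$$
and taking square roots yields the claim.

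I expect the main obstacle to be bookkeeping the normalization constants correctly: one must be careful about whether $\|a\eo+b\et\|^2$ equals $\tfrac12(|a|^2+|b|^2)$ or $|a|^2+|b|^2$ under the paper's conventions, since the factor of $\sqrt 2$ in the statement depends entirely on this. The cleanest way to settle it is to check consistency against the already-established identities — for instance property~8 of the chordal-metric theorem, $\chi_e(z,w)\le\|z-w\|$, or the remark relating $\chi_e$ to $\chi_{\bj}$ — which pins down the normalization unambiguously. Once that constant is fixed, every other step is a routine monotonicity estimate ($t\mapsto t/(1+t)$ increasing) together with the elementary one-variable chordal formula, so no genuine difficulty remains beyond careful substitution.
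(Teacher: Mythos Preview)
Your argument is correct and is essentially the paper's proof in cleaner form: both rest on the componentwise bound $(1+|\mathcal P_i(z)|^2)(1+|\mathcal P_i(w)|^2)\le(1+2\|z\|^2)(1+2\|w\|^2)$, which follows from the identity $2\|z\|^2=|\mathcal P_1(z)|^2+|\mathcal P_2(z)|^2$ (so your normalization worry is unfounded). The paper reaches the same inequality via an explicit chain of algebraic equivalences culminating in ``nonnegative $\ge$ nonpositive'', whereas you invoke the monotonicity directly; your presentation is the more transparent of the two.
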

\begin{proof} We shall establish the validity of the inequality in
this lemma by obtaining an equivalent inequality that holds
trivially.
For $z,w\in\mathbb{T},$ put $\mathcal{P}_1(z)=a, \ \mathcal{P}_2(z)=b, \ \mathcal{P}_1(w)=c, \text{ and } \mathcal{P}_2(w)=d.$ Then \\

~~~~$\mathbb \chi_e(z,w) \geq\displaystyle\frac{\left\|z-w\right\|}{\sqrt{1+2\left\|z\right\|^2}\sqrt{1+2\left\|w\right\|^2}}$\\

$\Leftrightarrow \mathbb \chi_e^2(z,w)
 \geq\displaystyle\frac{{\left\|z-w\right\|}^2}{(1+2\left\|z\right\|^2)(1+2\left\|w\right\|^2)}$\\

$\Leftrightarrow \chi^2(a,c)+ \chi^2(b,d)
 \geq\displaystyle\frac{\left|a- c\right|^2 + \left|b- d\right|^2}{(1 + \left|a\right|^2 + \left|b\right|^2 )(1 + \left|c\right|^2 + \left|d\right|^2)}$ \\

$\Leftrightarrow\displaystyle\frac{\left|a- c\right|^2}{(1 + \left|a\right|^2)(1 + \left|c\right|^2)}
 + \frac{\left|b- d\right|^2}{(1 + \left|b\right|^2)(1 + \left|d\right|^2)}$\\

$\geq\displaystyle\frac{\left|a- c\right|^2 }{(1 + \left|a\right|^2 + \left|b\right|^2 )(1 + \left|c\right|^2 + \left|d\right|^2)}
+ \frac{\left|b-d\right|^2}{(1 + \left|a\right|^2 +  \left|b\right|^2 )(1 + \left|c\right|^2 + \left|d\right|^2)}$\\

$\Leftrightarrow \left|a- c\right|^2 \left[\displaystyle\frac{1}{(1 + \left|a\right|^2)(1 + \left|c\right|^2)} - \frac{1}{(1 + \left|a\right|^2 + \left|b\right|^2 )(1 + \left|c\right|^2 + \left|d\right|^2)}\right]$\\

$\geq  \left|b- d\right|^2 \left[\displaystyle\frac{1}{(1 + \left|a\right|^2 + \left|b\right|^2 )(1 + \left|c\right|^2 + \left|d\right|^2)} -  \frac{1}{(1 + \left|b\right|^2)(1 + \left|d\right|^2)}\right]$\\

$\Leftrightarrow\displaystyle\frac{\left|a- c\right|^2 \left[ \left|d\right|^2 +  \left|b\right|^2 +  \left|a\right|^2 \left|d\right|^2 + \left|b\right|^2 \left|c\right|^2  +  \left|b\right|^2 \left|d\right|^2\right]}{(1 + \left|a\right|^2)(1 + \left|c\right|^2)}$\\

$\geq\displaystyle\frac{\left|b- d\right|^2 \left[-\left\{ \left|c\right|^2 +  \left|a\right|^2 + \left|a\right|^2 \left|c\right|^2  +  \left|a\right|^2 \left|d\right|^2 +  \left|b\right|^2 \left|c\right|^2\right\}\right]}{(1 + \left|b\right|^{2})(1 + \left|d\right|^2)}.  $ \\

The left hand side of the last inequality is a positive real
number where the right hand side is a negative real number and
hence holds trivially. \end{proof}

\begin{thm}
If the sequence $\{f_n\}$ converges bispherically uniformly to a bounded function $f$ on $E\subset \mathbb{T}$,
then $\{f_n\}$ converges uniformly to $f$ on $E$.
\end{thm}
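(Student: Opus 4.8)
The plan is to prove the contrapositive of the usual converse, but in a direct form: assume $\{f_n\}\to f$ bispherically uniformly with $f$ bounded, and show ordinary uniform convergence. The key tool is Lemma~\ref{lemmaineq}, which gives the lower bound
$$\chi_e(f_n(w),f(w))\geq \frac{\|f_n(w)-f(w)\|}{\sqrt{1+2\|f_n(w)\|^2}\sqrt{1+2\|f(w)\|^2}}.$$
So the whole argument reduces to controlling the denominator uniformly in $w$ and $n$. Since $f$ is bounded on $E$, there is $M>0$ with $\|f(w)\|\leq M$ for all $w\in E$, which takes care of the factor $\sqrt{1+2\|f(w)\|^2}\leq\sqrt{1+2M^2}$. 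The real work is to show that $\sup_{w\in E}\|f_n(w)\|$ stays bounded for all large $n$.

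First I would fix $\epsilon=1$ (say) and use bispherical uniform convergence to get $n_0$ so that $\chi_e(f_n(w),f(w))<1$ for all $n\geq n_0$ and all $w\in E$; since $\chi_e$ is bounded by $1$ in general (part~1 of the properties theorem) this in itself says nothing, so I would instead pick a genuinely small $\epsilon_0>0$. The idea: if $\|f_n(w)\|$ were enormous at some point $w$, then $\chi_e(f_n(w),f(w))$ would have to be close to the chordal distance from a bounded point to a point near $I_\infty$, which is bounded below away from $0$. More precisely, using the idempotent components, $\|f_n(w)\|$ large forces at least one of $|\mathcal P_1(f_n(w))|$, $|\mathcal P_2(f_n(w))|$ to be large, hence at least one of $\chi(\mathcal P_k(f_n(w)),\mathcal P_k(f(w)))$ to be close to $\chi(\cdot,\infty)$ for a bounded argument, which is bounded below; so $\chi_e(f_n(w),f(w))$ is bounded below by some absolute constant $c(M)>0$. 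Choosing $\epsilon_0<c(M)$ in the hypothesis then yields a uniform bound $\|f_n(w)\|\leq M'$ for all $n\geq n_0$ and $w\in E$, where $M'$ depends only on $M$.

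With both factors in the denominator of Lemma~\ref{lemmaineq} bounded by a constant $K=K(M)$ uniformly over $w\in E$ and $n\geq n_0$, the inequality rearranges to
$$\|f_n(w)-f(w)\|\leq K\cdot\chi_e(f_n(w),f(w))$$
for all $n\geq n_0$ and $w\in E$. Now given any $\eta>0$, apply bispherical uniform convergence with $\epsilon=\min(\epsilon_0,\eta/K)$ to obtain $n_1\geq n_0$ with $\chi_e(f_n(w),f(w))<\eta/K$ for all $n\geq n_1$, $w\in E$; then $\|f_n(w)-f(w)\|<\eta$ for all such $n$ and $w$, which is exactly uniform convergence on $E$.

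The main obstacle is the quantitative step in the second paragraph: extracting, from a single small-$\epsilon$ application of bispherical convergence, a uniform-in-$n$-and-$w$ bound on $\|f_n\|$. One must be careful that this bound really depends only on $M=\sup_E\|f\|$ and on the chosen threshold, not on $w$ or $n$, and that passing through the idempotent components $\mathcal P_1,\mathcal P_2$ (where $\chi(z,\infty)=1/\sqrt{1+|z|^2}$) gives a clean lower bound of the form: if $|\mathcal P_k(f_n(w))|\geq R$ and $|\mathcal P_k(f(w))|\leq M$ then $\chi(\mathcal P_k(f_n(w)),\mathcal P_k(f(w)))\geq \delta(R,M)$ with $\delta\to$ a positive limit as $R\to\infty$. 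Once that elementary one-variable estimate is in hand, everything else is bookkeeping with $\chi_e^2=\tfrac12(\chi^2\circ\mathcal P_1+\chi^2\circ\mathcal P_2)$ and the inequality of Lemma~\ref{lemmaineq}.
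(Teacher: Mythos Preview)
Your proposal is correct and follows essentially the same route as the paper: both arguments hinge on Lemma~\ref{lemmaineq} to pass from $\chi_e$ to $\|\cdot\|$, and both establish the needed eventual uniform bound on $\|f_n\|$ by passing to the idempotent components $\mathcal P_1,\mathcal P_2$ and using that the bounded limit $f$ forces each $\mathcal P_k(f_n)$ to be eventually bounded. Your treatment of the boundedness step is in fact more explicit and quantitative than the paper's, which simply asserts that spherical uniform convergence of $\mathcal P_k(f_n)$ to the bounded $\mathcal P_k(f)$ makes $\mathcal P_k(f_n)$ bounded for all but finitely many $n$.
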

\begin{proof}
Since $\{f_n\}$ converges bispherically uniformly to a bounded function $f$ on $E\subset \mathbb{T}$, for every $\epsilon >0$
there is $n_0$ such that for all $n\geq n_0$, we have
$$\chi_e(f_n(w),f(w))<\epsilon.$$
Now from this inequality and by the definition of the bicomplex
chordal metric it follows that $\mathcal{P}_i(f_n(w))$ converges
uniformly to $\mathcal{P}_i(f(w))$ on $\mathcal{P}_i(E)$, $i=1,2.$
Further, since $f$ is bounded on $E$, $\mathcal{P}_i(f(w))$ is
bounded on $\mathcal{P}_i(E)$, $i=1,2,$ and hence
$\mathcal{P}_i(f_n(w))$ is bounded on $\mathcal{P}_i(E)$, $i=1,2,$
for all but finitely many $n$. This implies that there is a
positive constant $L$ such that
$$\left\|f_n(w)\right\|<L \ \forall n\geq n_0,$$
on $\mathcal{P}_1(E)\times_e \mathcal{P}_2(E) \supseteq E.$
Now by Lemma \ref{lemmaineq}, we have
$$\left\|f_n(w)-f(w)\right\|\leq \sqrt{1+2\left\|f_n(w)\right\|^2}\sqrt{1+2\left\|f(w)\right\|^2}\chi_e(f_n(w),f(w))$$
for all $n\geq n_0$ and for all $w\in E.$ But $f$ is bounded on $E$ and $\{f_n\}$ is bounded on $E$ for all $n\geq n_0$, so it follows from the last inequality that $\{f_n\}$ converges uniformly to $f$ on $E.$ \end{proof}

\medskip

The notion of continuity with respect to the bicomplex chordal metric is given in the following definition.
\begin{defin}
A function $f$ is \textbf{bispherically continuous} at a point $w_0\in \mathbb{T}$ if,
given $\epsilon>0$, there exists $\delta>0$ such that
$$\chi_e(f(w),f(w_0))<\epsilon,$$
whenever $\left\|w-w_0\right\|<\delta$.
\end{defin}

In the case of \textbf{bicomplex meromorphic functions} we have the following result.

\begin{thm}
If $f(w)$ is a bicomplex meromorphic function in a domain $E\subset \mathbb{T}$, then $f$ is
bispherically continuous in $E$.
\end{thm}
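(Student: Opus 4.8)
The plan is to reduce everything to the one–complex–variable situation through Ringleb's Lemma (Theorem~\ref{mero1}) and then invoke the classical fact that a meromorphic function on a domain in $\mathbb{C}(\bo)$ is continuous with respect to the chordal metric $\chi$. Concretely, suppose $f:E\longrightarrow\overline{\mathbb{T}}$ is bicomplex meromorphic on the domain $E\subset\mathbb{T}$. By Theorem~\ref{mero1}, $E=\mathcal{P}_1(E)\times_e\mathcal{P}_2(E)$ and there are meromorphic functions $f_{e1}:\mathcal{P}_1(E)\to\overline{\mC(\bo)}$ and $f_{e2}:\mathcal{P}_2(E)\to\overline{\mC(\bo)}$ with $f(w)=f_{e1}(\mathcal{P}_1(w))\eo+f_{e2}(\mathcal{P}_2(w))\et$. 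Fix $w_0\in E$ and put $\zeta_0=\mathcal{P}_1(w_0)$, $\eta_0=\mathcal{P}_2(w_0)$.

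The key step is the following estimate. For $w\in E$ with $\|w-w_0\|<\delta$, write $\zeta=\mathcal{P}_1(w)$, $\eta=\mathcal{P}_2(w)$. From the definition of $\chi_e$,
\begin{equation*}
\chi_e(f(w),f(w_0))=\sqrt{\frac{\chi^2\!\bigl(f_{e1}(\zeta),f_{e1}(\zeta_0)\bigr)+\chi^2\!\bigl(f_{e2}(\eta),f_{e2}(\eta_0)\bigr)}{2}}\le\max_{i=1,2}\chi\!\bigl(f_{ei}(\cdot),f_{ei}(\cdot_0)\bigr).
\end{equation*}
Since $\|w-w_0\|$ controls $|\zeta-\zeta_0|$ and $|\eta-\eta_0|$ (the idempotent decomposition is a linear isomorphism, so $|\mathcal{P}_i(w-w_0)|\le\sqrt{2}\,\|w-w_0\|$, a fact already used in the excerpt), small $\|w-w_0\|$ forces $\zeta$ close to $\zeta_0$ and $\eta$ close to $\eta_0$. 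Now I would invoke the classical one–variable result: a meromorphic function on a domain in $\mathbb{C}(\bo)$ is continuous into $\overline{\mC(\bo)}$ equipped with $\chi$; this is standard (cf. \cite{Schiff}), because near an ordinary point it is holomorphic hence $\chi$-continuous, and near a pole $a$ one has $g(\zeta)\to\infty$ so $\chi(g(\zeta),\infty)\to0$. Given $\epsilon>0$, choose $\delta_1,\delta_2>0$ so that $|\zeta-\zeta_0|<\delta_1\Rightarrow\chi(f_{e1}(\zeta),f_{e1}(\zeta_0))<\epsilon$ and $|\eta-\eta_0|<\delta_2\Rightarrow\chi(f_{e2}(\eta),f_{e2}(\eta_0))<\epsilon$; then set $\delta=\tfrac{1}{\sqrt 2}\min\{\delta_1,\delta_2\}$ and conclude $\chi_e(f(w),f(w_0))<\epsilon$ whenever $\|w-w_0\|<\delta$.

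The one mildly delicate point — the only real obstacle — is making sure the estimate is valid uniformly over the two cases (pole versus non-pole) and at a point $w_0$ that is itself a pole in one or both components: there $f_{ei}(\zeta_0)=\infty$, so one must interpret $\chi(f_{ei}(\zeta),\infty)$ correctly and use that $f_{ei}$ is genuinely meromorphic (not merely holomorphic off its poles) to get $\chi$-continuity at $\zeta_0$. Once that classical input is granted, the bicomplex statement follows immediately by the componentwise inequality above, so the proof is short. I would state it as: apply Ringleb's Lemma, reduce to the classical $\chi$-continuity of $f_{e1},f_{e2}$, and combine via the displayed bound on $\chi_e$.
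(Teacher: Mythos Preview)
Your argument is correct and follows the same overall strategy as the paper --- decompose $f$ via Ringleb's Lemma into its idempotent components $f_{e1},f_{e2}$ and reduce to one complex variable --- but the execution differs. The paper splits into three cases at $w_0$ (holomorphic point, strong pole, weak pole): in the holomorphic case it uses $\chi_e(f(w),f(w_0))\le\|f(w)-f(w_0)\|$, at a strong pole it applies the identity $\chi_e(z,w)=\chi_e(z^{-1},w^{-1})$ together with continuity of $1/f_{ei}$, and at a weak pole it mixes the two devices. You instead bypass the case split by invoking the classical fact that a meromorphic function is $\chi$-continuous on its whole domain and then bounding $\chi_e\le\max_i\chi(f_{ei}(\cdot),f_{ei}(\cdot_0))$. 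Your route is shorter and treats poles and regular points uniformly; the paper's version is more self-contained since it essentially reproves the one-variable $\chi$-continuity inside the argument.

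One small correction: the assertion $E=\mathcal{P}_1(E)\times_e\mathcal{P}_2(E)$ is not part of Theorem~\ref{mero1} and is false for a general open set $E\subset\mathbb{T}$. Fortunately you never use it --- all you need is that $f_{ei}$ is meromorphic on $\mathcal{P}_i(E)$ and that $\mathcal{P}_i(w_0)\in\mathcal{P}_i(E)$, both of which hold --- so simply delete that clause.
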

\begin{proof}
Since $f(w)$ is a bicomplex meromorphic function on $E$,
then there exist meromorphic functions (see Thm. \ref{mero1})
$f_{e1}:E_1\longrightarrow\mathbb{C}(\bold{i_1})$ and
$f_{e2}:E_2\longrightarrow\mathbb{C}(\bold{i_1})$ with
$E_1=\mathcal{P}_1(E)$ and $E_2=\mathcal{P}_2(E)$ such that
$$f(z_1+z_2\bold{i_2})=f_{e1}(z_1-z_2\bold{i_1})\bold{e_1}+f_{e2}(z_1+z_2\bold{i_1})\bold{e_2} \mbox{ }\forall
\mbox{ }z_1+z_2\bold{i_2}\in E.$$
If $f$ is $\mathbb{T}$-holomorphic at $w_0\in E$, then $f_{ei}$ is holomorphic on $E_i$ for $i=1,2$. Hence, it is bispherically continuous on $E$ since
\begin{equation}
\chi_e(f(w),f(w_0))\leq\left\|f(w)-f(w_0)\right\|.
\label{meroza}
\end{equation}
If $w_0$ is a strong pole, then $\frac{1}{f_{e1}}$ and $\frac{1}{f_{e2}}$ is continuous at $\mathcal{P}_1(w_0)$ and
$\mathcal{P}_2(w_0)$ respectively. Moreover, noting that
\begin{equation*}
\mathbb \chi_e(f(w),f(w_0)) = \mathbb \chi_e(\frac{1}{f(w)},\frac{1}{f(w_0)})
\end{equation*}
\begin{equation*}
= \sqrt{\displaystyle\frac{\chi^{2}\left(\displaystyle\frac{1}{f_{e1}(\mathcal{P}_1(w))},\displaystyle\frac{1}{f_{e1}(\mathcal{P}_1(w_0))}\right) + \chi^{2}\left(\displaystyle\frac{1}{f_{e2}(\mathcal{P}_2(w))},\displaystyle\frac{1}{f_{e2}(\mathcal{P}_2(w_0))}\right)}{2}},
\end{equation*}
the result follows as in the preceding case.
If $w_0$ is a weak pole, then $\frac{1}{f_{e1}}$ or $\frac{1}{f_{e2}}$ is continuous at  $\mathcal{P}_1(w_0)$ or
$\mathcal{P}_2(w_0)$ respectively. Suppose, without loss of generality, that $\frac{1}{f_{e1}}$ is continuous at $\mathcal{P}_1(w_0)$ with $f_{e2}$ continuous at $\mathcal{P}_2(w_0)$.
Then,
$\mathbb \chi_e(f(w),f(w_0))=$
\begin{equation*} \sqrt{\displaystyle\frac{\chi^{2}\left(\displaystyle\frac{1}{f_{e1}(\mathcal{P}_1(w))},\displaystyle\frac{1}{f_{e1}(\mathcal{P}_1(w_0))}\right) + \chi^{2}(f_{e2}(\mathcal{P}_2(w),f_{e2}(\mathcal{P}_2(w_0)))}{2}},
\end{equation*}
and the result follows using the Equation \eqref{meroza} in the complex plane (in $\bold{i_1}$). \end{proof}

\begin{defin}A family $\boldsymbol F$ of bicomplex functions defined on a domain $\Omega\subset\mathbb{T}$ is said to be \textbf{bispherically equicontinuous}
at a point $w_{0}\in\Omega$ if for each $\epsilon > 0, \ \exists\delta = \delta(\epsilon,w_{0})$
such that $$\mathbb \chi_e \left(f(w),f(w_{0})\right)<  \epsilon \ \ \text{whenever}\ \left\|w-w_{0}\right\|< \delta  \,  \ \forall f \in \boldsymbol F. $$
Moreover, $\boldsymbol F$ is bispherically equicontinuous on a subset $E\subset\Omega$ if it is bispherically equicontinuous
at each point of $E$.
\end{defin}

\begin{rem} Since $$\mathbb \chi_e\left(f(w),f(w_{0})\right) \leq \left\|f(w)-f(w_{0})\right\|,$$
we see that equicontinuity with respect of the euclidean metric implies bispherical equicontinuity.
\end{rem}

\section{Normal Families of Bicomplex Meromorphic Functions}
\subsection{Basic results}

\begin{defin}
A family $\boldsymbol F$ of bicomplex meromorphic functions in a domain $\Omega\subset\mathbb{T}$ is \textbf{normal} in $\Omega$
if every sequence $\{f_{n}\}\subset\boldsymbol F$ contains a subsequence which converges bispherically uniformly on compact subsets of $\Omega$.
\end{defin}

That the limit function is either bicomplex meromorphic in $\Omega$ or in the set
$I^{-}_{\infty}$ or identically $\infty$ is a consequence of Corollary \ref{meroinft}. That the limit
can actually be identically $\infty$ is given by the following example.

\begin{exa}
Let $f_n(w)=\frac{n}{w}$, $n=1,2,3,\ldots,$ on the Lie Ball $D(0;r,r)$. Then each $f_n$ is bicomplex meromorphic and
$\{f_n\}$ converges bispherically uniformly to $\infty$ in $D(0;r,r)$.
\end{exa}

\begin{thm}
A family $\boldsymbol F$ of bicomplex meromorphic functions is normal in a domain $\Omega$ with respect to the bicomplex chordal metric if and only if
the family of meromorphic functions $F_{ei}=\mathcal{P}_i(\boldsymbol F)$ is normal in $\mathcal{P}_i(\Omega)$ for $i=1,2$ with respect to the chordal metric.
\label{promero}
\end{thm}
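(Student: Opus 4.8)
The plan is to exploit the idempotent decomposition $f = f_{e1}(\mathcal{P}_1(\cdot))\eo + f_{e2}(\mathcal{P}_2(\cdot))\et$ from Theorem \ref{mero1} together with the fact that the bicomplex chordal metric $\chi_e$ is, up to the factor $1/\sqrt 2$, the $\ell^2$-combination of the two copies of the ordinary chordal metric $\chi$ acting on the $\mathcal{P}_1$- and $\mathcal{P}_2$-components. The key elementary observation, valid for all $z,w\in\overline{\mathbb T}$ and all $i=1,2$, is
\begin{equation*}
\tfrac{1}{\sqrt 2}\,\chi(\mathcal{P}_i(z),\mathcal{P}_i(w))\ \leq\ \chi_e(z,w)\ \leq\ \tfrac{1}{\sqrt 2}\bigl(\chi(\mathcal{P}_1(z),\mathcal{P}_1(w)) + \chi(\mathcal{P}_2(z),\mathcal{P}_2(w))\bigr),
\end{equation*}
which turns bispherically uniform convergence on a set $E$ into ordinary $\chi$-uniform convergence of each component on $\mathcal{P}_i(E)$, and conversely. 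I would also record that compact subsets behave well under the projections: if $K\subset\Omega$ is compact then $\mathcal{P}_i(K)\subset\mathcal{P}_i(\Omega)$ is compact (continuity of $\mathcal{P}_i$), and conversely if $K_i\subset\mathcal{P}_i(\Omega)$ are compact then $K_1\times_e K_2$ is a compact subset of $\Omega$ (it is closed, bounded, and contained in $\Omega$ by definition of a $\mathbb T$-cartesian set).

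First, for the ``only if'' direction, suppose $\boldsymbol F$ is normal in $\Omega$ and fix a sequence $\{g_n\}\subset F_{e1}=\mathcal{P}_1(\boldsymbol F)$; write $g_n = \mathcal{P}_1(f_n)$ for some $f_n\in\boldsymbol F$. By normality of $\boldsymbol F$, a subsequence $\{f_{n_k}\}$ converges bispherically uniformly on compact subsets of $\Omega$ to some limit. Given a compact $K_1\subset\mathcal{P}_1(\Omega)$, pick a compact $K_2\subset\mathcal{P}_2(\Omega)$ (e.g. a closed disc inside $\mathcal{P}_2(\Omega)$ — here one uses that $\mathcal{P}_2(\Omega)$ is a nonempty open set), so $K_1\times_e K_2$ is compact in $\Omega$; the left-hand inequality above then forces $\{g_{n_k}\}=\{\mathcal{P}_1(f_{n_k})\}$ to converge $\chi$-uniformly on $K_1$. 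Hence $F_{e1}$ is normal in $\mathcal{P}_1(\Omega)$, and symmetrically for $F_{e2}$.

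For the ``if'' direction, assume both $F_{ei}$ are normal in $\mathcal{P}_i(\Omega)$ and take $\{f_n\}\subset\boldsymbol F$. Apply normality of $F_{e1}$ to $\{\mathcal{P}_1(f_n)\}$ to extract a subsequence along which the first component converges $\chi$-uniformly on compacta of $\mathcal{P}_1(\Omega)$; then apply normality of $F_{e2}$ to the second components \emph{along that already-chosen subsequence} to extract a further subsequence $\{f_{n_k}\}$ along which the second component also converges $\chi$-uniformly on compacta of $\mathcal{P}_2(\Omega)$. Now let $K\subset\Omega$ be compact; then $\mathcal{P}_i(K)$ is compact in $\mathcal{P}_i(\Omega)$, so each component converges $\chi$-uniformly there, and the right-hand inequality above gives $\chi_e(f_{n_k}(w),f(w))\to 0$ uniformly for $w\in K$, where $f$ is the bicomplex function whose components are the two limits. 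Thus $\boldsymbol F$ is normal in $\Omega$.

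The main obstacle is not the inequality manipulation, which is routine, but the bookkeeping around domains and projections: one must check that $\mathcal{P}_i(\Omega)$ is a genuine domain so that ``normal in $\mathcal{P}_i(\Omega)$'' is meaningful and so that auxiliary compact sets $K_i$ exist, and one must handle the diagonal/iterated subsequence extraction carefully so that a single subsequence works for both components simultaneously. A secondary subtlety worth a remark is that $\Omega$ need not itself be a $\mathbb T$-cartesian set, so $K\subset\Omega$ compact is only \emph{contained} in $\mathcal{P}_1(K)\times_e\mathcal{P}_2(K)$, not equal to it; since we only need an upper bound on $\chi_e$ on $K$, working on the larger product set $\mathcal{P}_1(K)\times_e\mathcal{P}_2(K)$ (which is compact and lies in $\mathcal{P}_1(\Omega)\times_e\mathcal{P}_2(\Omega)\supseteq\Omega$ — more care is needed here, so instead one simply uses that on $K$ itself the $i$-th component of $w$ lies in the compact set $\mathcal{P}_i(K)$) causes no difficulty.
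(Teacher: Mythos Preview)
Your ``if'' direction is correct and is essentially the paper's argument: iterated extraction of subsequences on the two components, followed by the observation that for compact $K\subset\Omega$ one only needs $\chi$-convergence of the components on the compact sets $\mathcal P_i(K)\subset\mathcal P_i(\Omega)$.

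Your ``only if'' direction, however, has a genuine gap. You assert that for compact $K_1\subset\mathcal P_1(\Omega)$ and an auxiliary compact $K_2\subset\mathcal P_2(\Omega)$, the product $K_1\times_e K_2$ is a compact subset \emph{of $\Omega$}. This is false in general: $K_1\times_e K_2$ is certainly compact in $\mathbb T$ and sits inside $\mathcal P_1(\Omega)\times_e\mathcal P_2(\Omega)$, but since $\Omega$ need not be a $\mathbb T$-cartesian set (a point you yourself flag in the last paragraph, though only for the other direction), there is no reason for $K_1\times_e K_2$ to lie inside $\Omega$. Consequently the bispherically uniform convergence of $\{f_{n_k}\}$ on compacta of $\Omega$ gives you nothing on $K_1\times_e K_2$, and the inequality cannot be invoked.

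The paper repairs exactly this point by working locally rather than with a global $K_1$. Having first extracted $\{f_{n_k}\}$ converging bispherically on all compacta of $\Omega$, it takes an arbitrary $z_0\in\mathcal P_1(\Omega)$, lifts it to some $w_0\in\Omega$, and chooses $r>0$ small enough that the closed $\mathbb T$-disc $\overline D(w_0;r,r)$ lies in $\Omega$. This disc \emph{is} a $\mathbb T$-cartesian set, and its $\mathcal P_1$-projection is the closed disc $\overline{B^1(z_0,r)}$; convergence of $\{f_{n_k}\}$ on $\overline D(w_0;r,r)$ then yields $\chi$-convergence of $\{\mathcal P_1(f_{n_k})\}$ on $\overline{B^1(z_0,r)}$. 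Since $z_0$ was arbitrary and the subsequence is fixed, one obtains locally uniform $\chi$-convergence on $\mathcal P_1(\Omega)$, hence normality of $F_{e1}$. Your argument becomes correct once you replace the global product $K_1\times_e K_2$ by this local-disc construction.
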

\begin{proof}
Suppose that $\boldsymbol F$ is normal in $\Omega$ with respect to the bicomplex chordal metric.
Let $\{(f_n)_1\}$ be a sequence in $\boldsymbol F_{e1}=\mathcal{P}_{1}(\boldsymbol F)$. We want to prove, without loss of generality, that
the family of meromorphic functions $\{(f_n)_1\}$ contains a subsequence which converges spherically locally uniformly on $\mathcal{P}_1(\Omega)$.
By definition, we can find a sequence $\{f_n\}$ in $\boldsymbol F$ such that $\{\mathcal{P}_1(f_n)\}=\{(f_n)_1\}$. Moreover, for any
$z_0\in\mathcal{P}_1(\Omega)$, we can find a $w_0\in\Omega$ such that $\mathcal{P}_1(w_0)=z_0$. Now, consider a closed $\mathbb{T}$-disk
$\overline{D}(w_0;r,r)$ in $\Omega$. By hypotheses, the sequence $\{f_{n}\}$ contains a subsequence $\{f_{n_k}\}$ which converges
bispherically uniformly on $\overline{D}(w_0;r,r)$. Hence, $\mathcal{P}_1(f_{n_k})=(f_{n_{k}})_1$ converges spherically uniformly on
$\overline{B^1(z_0,r)}\subset\mathcal{P}_1(\Omega)$.

Conversely, suppose that $\boldsymbol F_{ei}=\mathcal{P}_{i}(\boldsymbol F)$ is normal on $\mathcal{P}_{i}(\Omega)=\Omega_i$ for $i=1,2.$
We want to show that $\boldsymbol F$ is normal in $\Omega$ with respect to the bicomplex chordal metric. Let $\{f_n\}$ be any sequence in $\boldsymbol F$ and
$K$ be any compact subset of $\Omega$. Then $\{\mathcal{P}_1(f_n)\}=\{(f_n)_1\}$ is a sequence in
$\boldsymbol F_{e1}=\mathcal{P}_{1}(\boldsymbol F).$ Since $\boldsymbol F_{e1}=\mathcal{P}_{1}(\boldsymbol F)$ is normal in
$\mathcal{P}_1(\Omega)$ then $\{(f_n)_1\}$ has a subsequence $\{(f_{n_k})_1 \}$ which converges spherically
uniformly on $\mathcal{P}_1(K)$ to a $\overline{\mathbb{C}(\bo)}$-function. Now, consider $\{f_{n_k}\}$ in $\boldsymbol F$.
Then $\{\mathcal{P}_2(f_{n_k})\}=\{(f_{n_k})_2\}$ is a sequence in
$\boldsymbol F_{e2}=\mathcal{P}_{2}(\boldsymbol F)$. Since $\boldsymbol F_{e2}=\mathcal{P}_{2}(\boldsymbol F)$ is normal in
$\mathcal{P}_1(\Omega)$ then $\{(f_{n_k})_2\}$ has a subsequence $\{(f_{n_{k_l}})_2 \}$ which converges spherically
uniformly on $\mathcal{P}_2(K)$ to a $\overline{\mathbb{C}(\bo)}$-function. This implies that $\{(f_{n_{k_l}})_1\eo+(f_{n_{k_l}})_2\et\}$
is a subsequence of $\{f_n\}$ which converges bispherically uniformly on $\mathcal{P}_1(K)\times_{e}\mathcal{P}_2(K)\supseteq K$ to
a $\overline{\mathbb{T}}$-function showing that $\boldsymbol F$ is normal in $\Omega$ with respect to the bicomplex chordal metric. \end{proof}

\medskip
Since the limit function of a locally convergent sequence of meromorphic functions is either meromorphic or identically equal to $\infty$,
we have automatically the following result as a direct consequence of Theorems \ref{mero1} and \ref{promero}.

\begin{cor}
Let $\{f_{n}\}$ be a sequence of bicomplex meromorphic functions on $\Omega$ which converges bispherically uniformly on compact subsets to $f$.
Then $f$ is either a bicomplex meromorphic function on $\Omega$ or in the set
$I^{-}_{\infty}$ or identically $\infty$.
\label{meroinft}
\end{cor}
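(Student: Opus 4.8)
The plan is to reduce everything to the componentwise picture via the Ringleb decomposition and then invoke the classical one-variable fact about limits of locally uniformly (spherically) convergent sequences of meromorphic functions. Concretely, write each $f_n$ in its idempotent form $f_n(w)=(f_n)_1(\mathcal{P}_1(w))\eo+(f_n)_2(\mathcal{P}_2(w))\et$, where by Theorem \ref{mero1} the natural maps $(f_n)_i=\mathcal{P}_i(f_n)$ are genuine meromorphic functions on $\Omega_i:=\mathcal{P}_i(\Omega)$, $i=1,2$. The hypothesis that $\{f_n\}\to f$ bispherically uniformly on compact subsets of $\Omega$ means, by the very definition of $\chi_e$ in terms of $\chi$ on each factor, that $\{(f_n)_i\}$ converges spherically uniformly on compact subsets of $\Omega_i$ to the function $f_{ei}:=\mathcal{P}_i(f)$ for $i=1,2$ (here one uses that $\chi^2(\mathcal{P}_i(z),\mathcal{P}_i(w))\le 2\,\chi_e^2(z,w)$, and that compact subsets of $\Omega_i$ arise as $\mathcal{P}_i(K)$ for suitable compact $K\subset\Omega$ built from closed $\mathbb{T}$-discs, exactly as in the proof of Theorem \ref{promero}).

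Next I would apply the classical theorem of one complex variable (see, e.g., \cite{Schiff}): if a sequence of meromorphic functions on a domain $G\subset\mathbb{C}$ converges spherically uniformly on compact subsets, then the limit is either meromorphic on $G$ or identically $\infty$. Applying this on $\Omega_1$ and on $\Omega_2$ separately, we conclude that each $f_{ei}$ is either meromorphic on $\Omega_i$ or $f_{ei}\equiv\infty$ on $\Omega_i$. There are thus four cases according to the status of $f_{e1}$ and $f_{e2}$. If both $f_{e1}$ and $f_{e2}$ are meromorphic, then by Theorem \ref{mero1} the function $f=f_{e1}(\mathcal{P}_1(\cdot))\eo+f_{e2}(\mathcal{P}_2(\cdot))\et$ is bicomplex meromorphic on $\Omega$. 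If $f_{e1}\equiv\infty$ while $f_{e2}$ is meromorphic (and not identically $\infty$), then $\mathcal{P}_1(f(w))=\infty$ and $\mathcal{P}_2(f(w))\ne\infty$ for every $w$, so $f$ takes values in $I_{1,\infty}\subset I^-_\infty$; symmetrically, $f_{e2}\equiv\infty$ with $f_{e1}$ meromorphic gives values in $I_{2,\infty}\subset I^-_\infty$. Finally, if $f_{e1}\equiv\infty$ and $f_{e2}\equiv\infty$, then $f\equiv\infty\eo+\infty\et=\infty$.

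The main point requiring a little care — and what I expect to be the only genuine obstacle — is the very first reduction step: justifying that bispherical uniform convergence of $\{f_n\}$ on compact subsets of $\Omega$ forces spherical uniform convergence of $\{(f_n)_i\}$ on compact subsets of $\Omega_i$, for \emph{every} compact subset of $\Omega_i$ and not merely for projections of compacta already known to sit in $\Omega$. This is handled by the same device used in Theorem \ref{promero}: given a compact $L\subset\Omega_1$ and a point $z_0\in L$, pick $w_0\in\Omega$ with $\mathcal{P}_1(w_0)=z_0$, take a closed $\mathbb{T}$-disc $\overline{D}(w_0;r,r)\subset\Omega$, note $\overline{B^1(z_0,r)}=\mathcal{P}_1(\overline{D}(w_0;r,r))$, and cover $L$ by finitely many such discs' projections; uniform convergence on each of finitely many such closed balls yields uniform convergence on $L$. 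With this in place, the bound $\chi(\mathcal{P}_i(f_n(w)),\mathcal{P}_i(f(w)))\le\sqrt{2}\,\chi_e(f_n(w),f(w))$ gives the claimed componentwise convergence, and the rest is the case analysis above. I would also explicitly remark that all four cases do occur and in particular that $f\equiv\infty$ is possible, pointing back to the example $f_n(w)=n/w$ on the Lie Ball already given in the paper, so that the corollary is seen to be sharp.
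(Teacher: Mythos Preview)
Your approach is essentially the same as the paper's: the paper states the corollary as a direct consequence of Theorems \ref{mero1} and \ref{promero} together with the classical one-variable fact that the spherical locally uniform limit of meromorphic functions is either meromorphic or identically $\infty$, and you carry out exactly this reduction, supplying the componentwise-convergence argument (which is implicit in the proof of Theorem \ref{promero}) and the four-case analysis in more detail than the paper does. One small imprecision: in the mixed case where, say, $f_{e1}\equiv\infty$ and $f_{e2}$ is meromorphic, your claim that $\mathcal{P}_2(f(w))\ne\infty$ for \emph{every} $w$ fails at poles of $f_{e2}$, but this is a wrinkle in the statement itself (what ``in the set $I^{-}_{\infty}$'' means for a function) rather than in your strategy.
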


Moreover, from the fact that a family of analytic functions is
normal with respect to the usual metric if and only if the family
is normal with respect to the chordal metric (see \cite{Schiff}, Cor.
3.1.7)  and from the characterization of the notion of normality
for a family of bicomplex holomorphic functions (see \cite{Sharma}, Thm.
8), we obtain the following result as a consequence of Theorems \ref{theo5} and \ref{promero}.

\begin{cor}
A family $\boldsymbol F$ of $\mathbb{T}$-holomorphic functions is normal in a domain $\Omega$ with respect to the euclidian metric
if and only if $\boldsymbol F$ is normal in $\Omega$ with respect to the bicomplex chordal metric.
\end{cor}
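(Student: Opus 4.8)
The plan is to chain together the three prior results the corollary cites, using Theorem~\ref{promero} as the pivot. First I would recall the classical one-variable fact (e.g.\ \cite{Schiff}, Cor.~3.1.7): a family of holomorphic functions on a domain $U\subset\mathbb{C}(\bo)$ is normal with respect to the Euclidean metric on $\mathbb{C}(\bo)$ if and only if it is normal with respect to the chordal metric $\chi$ on $\overline{\mathbb{C}(\bo)}$. This is the scalar ingredient; everything else is the idempotent-decomposition bookkeeping that the paper has already set up.

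Next I would invoke Theorem~\ref{theo5} (Ringleb's decomposition for $\mathbb{T}$-holomorphic functions) together with the characterization of normality of $\mathbb{T}$-holomorphic families from \cite{Sharma}, Thm.~8. Concretely: let $\boldsymbol F$ be a family of $\mathbb{T}$-holomorphic functions on $\Omega$. By the cited characterization, $\boldsymbol F$ is normal in $\Omega$ with respect to the Euclidean metric on $\mathbb{T}$ if and only if each projected family $\boldsymbol F_{ei}=\mathcal{P}_i(\boldsymbol F)$ is a normal family of holomorphic functions on $\mathcal{P}_i(\Omega)$ with respect to the Euclidean metric on $\mathbb{C}(\bo)$, for $i=1,2$. (Here one uses that $\mathcal{P}_i$ is an $\mathbb{R}$-linear, in fact $\mathbb{C}(\bo)$-linear, open map, so the projected families consist of holomorphic functions by Theorem~\ref{theo5}, and that normality transfers through $\mathcal{P}_i$.)

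Then I would apply the scalar equivalence to each $i$: $\boldsymbol F_{ei}$ is normal with respect to the Euclidean metric on $\mathcal{P}_i(\Omega)$ iff $\boldsymbol F_{ei}$ is normal with respect to the chordal metric $\chi$. Finally, Theorem~\ref{promero} says that $\boldsymbol F$ is normal in $\Omega$ with respect to the bicomplex chordal metric $\chi_e$ if and only if each $\boldsymbol F_{ei}$ is normal in $\mathcal{P}_i(\Omega)$ with respect to $\chi$. Stringing the biconditionals together: Euclidean normality of $\boldsymbol F$ on $\mathbb{T}$ $\iff$ Euclidean normality of both $\boldsymbol F_{ei}$ $\iff$ chordal normality of both $\boldsymbol F_{ei}$ $\iff$ $\chi_e$-normality of $\boldsymbol F$. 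This closes the argument.

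I do not expect any genuine obstacle here — the corollary is essentially a formal concatenation. The only point requiring a word of care is making sure the ``Euclidean normality of $\boldsymbol F$ on $\mathbb{T}$ $\iff$ Euclidean normality of both $\boldsymbol F_{ei}$'' step is legitimately supplied by \cite{Sharma}, Thm.~8 (or is re-derived directly: a sequence in $\boldsymbol F$ has a locally uniformly convergent subsequence iff one can first extract a subsequence whose $\mathcal{P}_1$-parts converge locally uniformly and then further refine so the $\mathcal{P}_2$-parts converge locally uniformly, which works because $\|g\|$ and $\max_i|\mathcal{P}_i(g)|$ are comparable, so local uniform convergence in $\|\cdot\|$ is equivalent to local uniform convergence of both projections). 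Granting that, the proof is a two-line citation chain through Theorems~\ref{theo5} and \ref{promero}.
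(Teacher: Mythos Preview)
Your proposal is correct and follows exactly the route the paper takes: the corollary is stated as an immediate consequence of Theorems~\ref{theo5} and \ref{promero}, combined with \cite{Schiff}, Cor.~3.1.7 and the projection characterization of normality for $\mathbb{T}$-holomorphic families from \cite{Sharma}, Thm.~8. Your chain of biconditionals through the projected families $\boldsymbol F_{ei}$ is precisely the argument the paper has in mind, and your extra caution about the ``Euclidean normality of $\boldsymbol F$ $\iff$ Euclidean normality of both $\boldsymbol F_{ei}$'' step is well placed but already covered by the cited result.
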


\subsection{Bicomplex Montel Theorem}
In this subsection, we will give a proof of a bicomplex version of the Montel theorem for a family of bicomplex meromorphic functions.
We start with the following results.

\begin{lem}
If $\{f_{n}\}$ is the sequence of bispherically continuous functions which converges bispherically uniformly to a function
$f$ on a compact subset $E\subset\mathbb{T}$. Then $f$ is uniformly bispherically continuous on $E$
and the functions $\{f_{n}\}$ are bispherically equicontinuous on $E$.
\label{biequi}
\end{lem}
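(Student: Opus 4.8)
The plan is to reduce everything to the classical one-variable statement via the idempotent decomposition, exactly as was done for Theorem~\ref{promero}. Write each $f_n$ as $f_n(w) = (f_n)_1(\mathcal{P}_1(w))\eo + (f_n)_2(\mathcal{P}_2(w))\et$ with $(f_n)_i = \mathcal{P}_i \circ f_n$, and set $E_i = \mathcal{P}_i(E)$ for $i=1,2$. Since $E$ is compact in $\mathbb{T}$ and the projections $\mathcal{P}_i$ are continuous, each $E_i$ is compact in $\mathbb{C}(\bo)$. The first step is to observe, from the definition of $\chi_e$ and the inequality
$$\chi(\mathcal{P}_i(z),\mathcal{P}_i(w)) \leq \sqrt{2}\,\chi_e(z,w),\qquad i=1,2,$$
that bispherically uniform convergence of $\{f_n\}$ to $f$ on $E$ forces $(f_n)_i \to f_i$ spherically uniformly on $E_i$, where $f_i = \mathcal{P}_i \circ f$; and likewise each $(f_n)_i$, being $\mathcal{P}_i$ of a bispherically continuous function, is spherically continuous on $E_i$. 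Conversely, $\chi_e^2(z,w) = \tfrac12(\chi^2(\mathcal{P}_1(z),\mathcal{P}_1(w)) + \chi^2(\mathcal{P}_2(z),\mathcal{P}_2(w)))$ lets us assemble $i=1,2$ estimates back into a $\chi_e$-estimate.

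The second step is to invoke the classical fact (see \cite{Schiff}) that a spherically uniformly convergent sequence of spherically continuous functions on a compact set has spherically continuous (indeed uniformly spherically continuous) limit, and that the sequence itself is spherically equicontinuous on that compact set. Applying this to $\{(f_n)_i\}$ on $E_i$ for $i=1,2$ gives: $f_i$ is uniformly spherically continuous on $E_i$, and $\{(f_n)_i\}$ is spherically equicontinuous on $E_i$, for $i=1,2$.

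The third step is to reassemble. For uniform bispherical continuity of $f$: given $\epsilon>0$, choose $\delta_i>0$ so that $\chi(f_i(s),f_i(t))<\epsilon$ whenever $s,t\in E_i$ with $|s-t|<\delta_i$; put $\delta = \min(\delta_1,\delta_2)$. If $z,w\in E$ with $\|z-w\|<\delta$, then $|\mathcal{P}_i(z)-\mathcal{P}_i(w)| \leq \sqrt{2}\,\|z-w\| < \sqrt{2}\,\delta$ — here I would in fact start from $\delta' = \delta/\sqrt2$ to absorb the constant — so each $\chi(f_i(\mathcal{P}_i(z)),f_i(\mathcal{P}_i(w)))<\epsilon$, whence $\chi_e(f(z),f(w)) < \epsilon$ by the displayed formula for $\chi_e^2$. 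The same bookkeeping, applied uniformly in $n$ to the equicontinuity moduli of $\{(f_n)_i\}$, yields bispherical equicontinuity of $\{f_n\}$ on $E$.

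The only mild obstacle is cosmetic: keeping track of the $\sqrt2$ factors relating $\|\cdot\|$ on $\mathbb{T}$ to $|\cdot|$ on the idempotent components (via $|\mathcal{P}_i(w)| \leq \sqrt2\,\|w\|$) and relating $\chi_e$ to the component chordal distances, so that the $\delta$'s chosen on the $\mathbb{C}(\bo)$ side genuinely translate into a workable $\delta$ on the $\mathbb{T}$ side. There is no real analytic content beyond the one-variable lemma; the argument is essentially a two-fold application of it glued by the definition of $\chi_e$.
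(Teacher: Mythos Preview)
Your reduction to components has a genuine gap. You write $f_n(w) = (f_n)_1(\mathcal{P}_1(w))\eo + (f_n)_2(\mathcal{P}_2(w))\et$ and then treat each $(f_n)_i$ as a function on $E_i = \mathcal{P}_i(E)\subset\mathbb{C}(\bo)$. But that requires $\mathcal{P}_i(f_n(w))$ to depend only on $\mathcal{P}_i(w)$, which is precisely the content of Ringleb's decomposition (Theorems~\ref{theo5} and~\ref{mero1}) and is known for $\mathbb{T}$-holomorphic or bicomplex meromorphic maps, \emph{not} for an arbitrary bispherically continuous $f_n:E\to\overline{\mathbb{T}}$. The lemma as stated concerns general bispherically continuous functions, so you are not entitled to one-variable functions $(f_n)_i$ on $E_i$; at best you have $\mathcal{P}_i\circ f_n:E\to\overline{\mathbb{C}(\bo)}$, still a function of the full variable $w\in E\subset\mathbb{T}$. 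Your third step, where you evaluate $f_i(\mathcal{P}_i(z))$, inherits the same unjustified assumption.

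The paper's route avoids this entirely: it simply notes that the proof of the one-variable analogue (Proposition~1.6.2 in \cite{Schiff}) is a pure metric-space argument (triangle inequality, uniform convergence, continuity of each $f_n$ on a compact set) and carries over verbatim with $\chi$ replaced by $\chi_e$. Concretely, given $\epsilon>0$ choose $N$ with $\chi_e(f_n(w),f(w))<\epsilon/3$ on $E$ for all $n\geq N$, and use
\[
\chi_e(f(z),f(w)) \leq \chi_e(f(z),f_N(z)) + \chi_e(f_N(z),f_N(w)) + \chi_e(f_N(w),f(w))
\]
together with uniform continuity of $f_N$ on the compact $E$; equicontinuity of $\{f_n\}$ follows by the analogous three-term estimate through $f$. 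Your idempotent detour could be repaired by keeping $\mathcal{P}_i\circ f_n$ as functions on $E$ rather than on $E_i$, but then the decomposition no longer buys anything over the direct argument.
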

\begin{proof} The proof is same, with necessary changes, as that of
one complex variable analogue (see \cite{Schiff}, Prop. 1.6.2).\end{proof}

\begin{lem}
The bicomplex Riemann sphere is a compact metric space.
\label{biequi2}
\end{lem}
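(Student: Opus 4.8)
The plan is to use the idempotent representation $\overline{\mathbb{T}} = \overline{\mathbb{C}(\bi_1)} \times_e \overline{\mathbb{C}(\bi_1)}$ together with the standard fact that the Riemann sphere $\overline{\mathbb{C}(\bi_1)} = S$ (the unit sphere in $\mathbb{R}^3$) is a compact metric space under the chordal metric $\chi$, and then invoke the classical theorem that a finite product of compact metric spaces is compact (with the product topology, which here is metrizable). So the first step is to recall that, via the two stereographic projections on the idempotent components, there is a bijection between $\overline{\mathbb{T}}$ and $S \times S$; this is exactly the geometric model described in Section~3. The second step is to verify that the bicomplex chordal metric $\chi_e$ induces the product topology on $S \times S$.

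For the second step, I would observe that by the definition
\begin{equation*}
\chi_e(z,w) = \sqrt{\frac{\chi^2(\mathcal{P}_1(z),\mathcal{P}_1(w)) + \chi^2(\mathcal{P}_2(z),\mathcal{P}_2(w))}{2}},
\end{equation*}
one has the elementary two-sided estimate
\begin{equation*}
\frac{1}{\sqrt 2}\max_i \chi(\mathcal{P}_i(z),\mathcal{P}_i(w)) \leq \chi_e(z,w) \leq \max_i \chi(\mathcal{P}_i(z),\mathcal{P}_i(w)),
\end{equation*}
so $\chi_e$ is Lipschitz-equivalent to the max-metric on the product $\overline{\mathbb{C}(\bi_1)} \times \overline{\mathbb{C}(\bi_1)}$, hence induces the product topology. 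Consequently $(\overline{\mathbb{T}},\chi_e)$ is isometrically (up to equivalence of metrics) the product of two copies of the compact metric space $(\overline{\mathbb{C}(\bi_1)},\chi)$.

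The third step is then immediate: a product of two compact metric spaces is a compact metric space (equivalently, one can argue sequentially — given a sequence $\{w_n\}$ in $\overline{\mathbb{T}}$, pass to a subsequence along which $\{\mathcal{P}_1(w_n)\}$ converges in $\overline{\mathbb{C}(\bi_1)}$ by compactness of the Riemann sphere, then a further subsequence along which $\{\mathcal{P}_2(w_n)\}$ converges, and the diagonal-style subsequence converges in $\chi_e$ by the estimate above). I do not anticipate a genuine obstacle here; the only point requiring a little care is making sure that the topology induced by $\chi_e$ on $\overline{\mathbb{T}}$ really is the product topology coming from the two Riemann spheres rather than something finer or coarser, and the displayed sandwich inequality settles that cleanly. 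Everything else is a direct appeal to the compactness of $S$ and the stability of compactness under finite products.
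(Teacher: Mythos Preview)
Your proposal is correct, and the sequential argument you give parenthetically is exactly the paper's proof: extract a subsequence along which $\mathcal{P}_1(w_n)$ converges in $\overline{\mathbb{C}(\bi_1)}$, then a further subsequence along which $\mathcal{P}_2$ converges, and conclude bispherical convergence. Your explicit Lipschitz equivalence between $\chi_e$ and the max-metric is a nice addition that makes precise what the paper leaves implicit (namely, that componentwise spherical convergence is equivalent to bispherical convergence), but otherwise the approaches coincide.
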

\begin{proof}
We will prove that $\overline{\mathbb{T}}$ is sequentially compact. Let $\{w_n\}$ be a sequence in $\overline{\mathbb{T}}$.
We have that $\{\mathcal{P}_i(w_n)\}$ is a sequence in $\overline{\mC(\bo)}$ for $i=1,2$. Since the Riemann sphere is
the one-point compactification of the complex plane, $\{\mathcal{P}_1(w_n)\}$ has a spherically convergent subsequence $\{\mathcal{P}_1(w_{n_k})\}$ in $\overline{\mC(\bo)}$
and $\{\mathcal{P}_2(w_{n_k})\}$ has also a spherically convergent subsequence $\{\mathcal{P}_2(w_{{n_k}_l})\}$ in $\overline{\mC(\bo)}$ such that
$\{\mathcal{P}_i(w_{{n_k}_l})\}$ converges spherically in $\overline{\mC(\bo)}$ for $i=1,2$.
Hence, $\{w_{{n_k}_l}\}$ converges bispherically in $\overline{\mathbb{T}}$.\end{proof}

\medskip
As for one complex variable, in discussing the normality of a family of bicomplex meromorphic functions, the concept of local boundedness is not entirely relevant.
However, bispherical equicontinuity can be substituted in the following counterpart of Montel's theorem.

\begin{thm}
A family $\boldsymbol F$ of bicomplex meromorphic functions in a bicomplex
domain $\Omega\subset\mathbb{T}$ is normal if and only if $\boldsymbol F$ is bispherically equicontinuous in $\Omega$.
\end{thm}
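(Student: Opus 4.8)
The plan is to reduce everything to the classical one--variable Montel--type theorem for meromorphic functions via the idempotent decomposition, exactly as was done for the basic normality criterion in Theorem~\ref{promero}. Recall that in the one complex variable setting one has: a family of meromorphic functions on a planar domain is normal (with respect to the chordal metric) if and only if it is spherically equicontinuous there (see \cite{Schiff}, Thm.~3.3.2 and the Arzel\`a--Ascoli argument behind it). I would cite this as the underlying classical fact.

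First I would prove the forward implication. Assume $\boldsymbol F$ is normal in $\Omega$. By Theorem~\ref{promero}, each $\boldsymbol F_{ei}=\mathcal{P}_i(\boldsymbol F)$ is normal in $\Omega_i=\mathcal{P}_i(\Omega)$ with respect to the chordal metric, hence by the classical result each $\boldsymbol F_{ei}$ is spherically equicontinuous on $\Omega_i$. Now fix $w_0\in\Omega$ and $\epsilon>0$. Since $\mathcal{P}_i(\Omega)$ is open in $\mC(\bo)$ and $\mathcal{P}_i(w_0)\in\mathcal{P}_i(\Omega)$, spherical equicontinuity of $\boldsymbol F_{ei}$ at $\mathcal{P}_i(w_0)$ gives $\delta_i>0$ with $\chi(f_{ei}(\zeta),f_{ei}(\mathcal{P}_i(w_0)))<\epsilon$ whenever $|\zeta-\mathcal{P}_i(w_0)|<\delta_i$, for all $f\in\boldsymbol F$. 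Put $\delta=\min\{\delta_1,\delta_2\}$. If $\|w-w_0\|<\delta/\sqrt2$ (or a comparable bound), then $|\mathcal{P}_i(w)-\mathcal{P}_i(w_0)|\le\sqrt2\|w-w_0\|<\delta\le\delta_i$ for $i=1,2$, so
$$\chi_e(f(w),f(w_0))=\sqrt{\frac{\chi^2(f_{e1}(\mathcal{P}_1(w)),f_{e1}(\mathcal{P}_1(w_0)))+\chi^2(f_{e2}(\mathcal{P}_2(w)),f_{e2}(\mathcal{P}_2(w_0)))}{2}}<\epsilon$$
for every $f\in\boldsymbol F$, which is bispherical equicontinuity at $w_0$. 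Since $w_0$ was arbitrary, $\boldsymbol F$ is bispherically equicontinuous in $\Omega$.

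Conversely, assume $\boldsymbol F$ is bispherically equicontinuous in $\Omega$. I would show each $\boldsymbol F_{ei}$ is spherically equicontinuous in $\Omega_i$, so that the classical theorem makes it normal, and then apply Theorem~\ref{promero} to conclude $\boldsymbol F$ is normal. The key point is that the $\mathcal{P}_i$ are open linear surjections and, from the definition of $\chi_e$, one has $\chi(\mathcal{P}_i(z),\mathcal{P}_i(w))\le\sqrt2\,\chi_e(z,w)$; also each point $\zeta_0\in\Omega_i$ and each nearby $\zeta$ can be lifted to points $w_0,w\in\Omega$ with $\mathcal{P}_i(w_0)=\zeta_0$, $\mathcal{P}_i(w)=\zeta$, and $\|w-w_0\|$ controlled by $|\zeta-\zeta_0|$ (choosing the other idempotent component to stay fixed near a lift inside the $\mathbb{T}$--cartesian structure of $\Omega$). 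Hence a $\delta$ for bispherical equicontinuity at $w_0$ produces a $\delta'$ for spherical equicontinuity of $\boldsymbol F_{ei}$ at $\zeta_0$. The main obstacle is precisely this lifting step: $\Omega$ need not be a $\mathbb{T}$--cartesian set, so given $\zeta$ near $\zeta_0=\mathcal{P}_i(w_0)$ one must verify that $\zeta\,\eo+\mathcal{P}_{3-i}(w_0)\,\et$ (or a suitable substitute) lies in $\Omega$ for $\zeta$ close enough; this follows from openness of $\Omega$ in $\mathbb{T}$, since $w_0\in\Omega$ and the map $\zeta\mapsto\zeta\,\mathbf{e}_i+\mathcal{P}_{3-i}(w_0)\,\mathbf{e}_{3-i}$ is continuous and sends $\zeta_0$ to $w_0$. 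Once the lifting and the metric comparison $\chi\le\sqrt2\,\chi_e$, $|\mathcal{P}_i(\cdot)|\le\sqrt2\|\cdot\|$ are in place, both implications are routine $\epsilon$--$\delta$ bookkeeping combined with the already--proven Theorem~\ref{promero} and the classical one--variable Montel--type normality/equicontinuity equivalence.
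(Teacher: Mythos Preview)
Your proof is correct, but it follows a genuinely different route from the paper's. The paper proves the forward implication by a direct contradiction argument: if $\boldsymbol F$ were normal but not bispherically equicontinuous at some $w_0$, one would find sequences $\{w_n\}\to w_0$ and $\{f_n\}\subset\boldsymbol F$ with $\chi_e(f_n(w_0),f_n(w_n))>\epsilon$, and then a bispherically uniformly convergent subsequence of $\{f_n\}$ would be bispherically equicontinuous at $w_0$ by Lemma~\ref{biequi}, a contradiction. For the converse the paper runs an intrinsic Arzel\`a--Ascoli argument in $(\overline{\mathbb{T}},\chi_e)$: pick a countable dense set, use the compactness of the bicomplex Riemann sphere (Lemma~\ref{biequi2}) to diagonalize, and use bispherical equicontinuity to upgrade pointwise to locally uniform bispherical convergence. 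Neither direction in the paper passes through Theorem~\ref{promero} or the classical one--variable theorem.

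Your approach, by contrast, reduces both directions to the classical spherical Montel/Arzel\`a--Ascoli equivalence via Theorem~\ref{promero} and the elementary metric comparisons $\chi(\mathcal{P}_i(z),\mathcal{P}_i(w))\le\sqrt{2}\,\chi_e(z,w)$ and $|\mathcal{P}_i(w)|\le\sqrt{2}\,\|w\|$, together with the lifting $\zeta\mapsto \zeta\,\mathbf e_i+\mathcal{P}_{3-i}(w_0)\,\mathbf e_{3-i}$ into $\Omega$ (valid for $\zeta$ near $\zeta_0$ by openness of $\Omega$). This is more economical and exploits the machinery already built; the paper's argument is more self--contained, develops Lemmas~\ref{biequi} and \ref{biequi2} as tools of independent interest, and shows that the classical Arzel\`a--Ascoli scheme transfers verbatim to $(\overline{\mathbb{T}},\chi_e)$ without ever projecting to $\mC(\bo)$.
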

\begin{proof}
Suppose $\boldsymbol F$ is normal but not bispherically equicontinuous in $\Omega$. Then there is a point $w_{0} \in \Omega$,
some $\epsilon > 0 $, a sequence $\{w_{n}\}\longrightarrow w_{0}$ and a sequence $\{f_{n}\} \subset \boldsymbol F$ such that
\begin{equation}
\mathbb \chi_e\left(f_n(w_{0}),f_n(w_{n})\right) > \epsilon, \ \ n = 1,2,3\ldots.
\label{eps}
\end{equation}
Since $\boldsymbol F$ is normal, so $\{f_{n}\}$ has a subsequence $\{f_{n_{k}}\}$ converging bisherically uniformly on compact subsets of $\Omega$ and in particular on a compact subset containing $\{w_{n}\}$.
By the Lemma \ref{biequi}, this implies that $\{f_{n_{k}}\}$ is  bispherically equicontinuous at $w_{0}$.
This is a contradiction with the Equation \eqref{eps}. Therefore $\boldsymbol F$ is bispherically equicontinuous.

Conversely, let $\boldsymbol F$ be a bispherically equicontinuous family of bicomplex meromorphic functions defined on $\Omega$.
To show that $\boldsymbol F$ is normal in $\Omega$ we need to extract a locally bispherically uniformly convergent subsequence from every sequence in $\boldsymbol F.$
Let $\{f_{n}\}$  be any sequence in $\boldsymbol F$ and let $E$ be a countable dense subset of $\Omega$, for example we can take $E\bigcap \Omega$ where
$E=\{w_{n}=w_{1,n}\bold{e_1}+w_{2,n}\bold{e_2}: w_{j,n}=x_{j,n}+i_{1}y_{j,n}\text{ where } x_{j,n}, y_{j,n} \in \mathbb Q, j=1,2 \}.$
Take any sequence $\{f_{n}\}\subset \boldsymbol F$ and consider the sequence of bicomplex numbers $\{f_{n}(w_1)\}$. Since the bicomplex Riemann sphere is a \textbf{compact metric space}
(see Lemma \ref{biequi2}), $\{f_{n}(w_{1})\}$
has a subsequence $\{f_{n,1}\}$ converging bispherically at $w_1$. Next, consider the sequence
$\{f_{n,1}(w_{2})\}$, we can also find a subsequence $\{f_{n,2}\}$ of $\{f_{n,1}\}$ such that $\{f_{n,2}(w_{2})\}$ converges bispherically at $w_2$. Since $\{f_{n,2}\}$
is a subsequence of $\{f_{n,1}\},$ $\{f_{n,2}(w_{1})\}$ also converges bispherically at $w_1$. Therefore, $\{f_{n,2}\}$  converges bispherically at $w_1$ and $w_2$. Continuing this process,
for each $k\geq 1$ we obtain a subsequence $\{f_{n,k}\}$ that converges bispherically at $w_1, w_2, \ldots, w_k$ and $\{f_{n,k}\} \subset \{f_{n,k-1}\}$. Now by
Cantor's diagonal process we define a sequence $\{g_n\}$ as
$$g_{n}(w)=f_{n,n}(w), \ \ \ n\in \mathbb N.$$
Hence, $\{g_n(w_k)\}$ is a subsequence of the bispherically convergent sequence
$\{f_{n,k}(w_k) \}_{n\geq k}$ and hence converges for each $w_k \in E$.
Now, by hypothesis, $\{g_n \}$ is bispherically equicontinuous on every compact subset of $\Omega$.
So for every $\epsilon>0$ and every compact subset $K$ of $\Omega$ there is a $\delta>0$ such that
\begin{equation}
\chi_e (g_n(w),g_n(w^{\prime}))< \frac{\epsilon}{3},\ \\\ \forall n\in\mathbb N  \text{ and } \forall \ w, w^{\prime} \in K \text{ with } \left\|w-w^{\prime} \right\|< \delta.
\label{eq02}
\end{equation}
Since $K$ is compact, we can cover it by a finite subcover, say $$K\subset\bigcup^p_{j=1}\{B^2(\varsigma_{j},\frac {\delta}{2}): \varsigma_{j} \in E\}.$$
Since $\varsigma_{j}\in E$,  $\{g_{n}(\varsigma _{j})\}$ converges for each $j:1\leq j \leq p$ which further implies that
$\{g_{n}(\varsigma _{j})\}$ is a Cauchy sequence. That is, there is a positive integer $n_0$ such that
\begin{equation}
\chi_e(g_{n}(\varsigma _{j}),g_{m}(\varsigma _{j})) <\frac{\epsilon}{3},\mbox{ } \forall \ m, n \geq n_{0}, \ (1\leq j \leq p).
\label{eq03}
\end{equation}
Finally, for any $w\in K$, $w\in B^2(\varsigma_{j},\frac {\delta}{2})$ for some $1\leq j_0 \leq p$. Thus, from Equations \eqref{eq02} and \eqref{eq03}, we have
$$ \chi_e(g_{n}(w), g_{m}(w)) \leq \chi_e(g_{n}(w), g_{n}(\varsigma _{j_0}))+\chi_e (g_{n}(\varsigma _{j_0}), g_{m}(\varsigma _{j_0}))+\chi_e(g_{n}(\varsigma _{j_0}), g_{m}(w))$$
$$< \frac{\epsilon}{3} + \frac{\epsilon}{3} + \frac{\epsilon}{3}, \\\ \forall m,n\geq n_0.$$
Therefore, by construction, $\ \{g_{n}\}$ is locally bispherically uniformly Cauchy and hence converges locally bispherically uniformly on $\Omega$.\end{proof}

\subsection{Fundamental Normality Test}
Finally, we prove the bicomplex version of the Fundamental Normality Test for meromorphic functions. First,
we prove it for bicomplex holomorphic functions.
\begin{thm} Let $\boldsymbol F$ be a family of bicomplex holomorphic functions in a domain $\Omega\subset \mathbb {T}$.
Suppose there are $\alpha, \ \beta \in \mathbb{T}$ such that\\
(a) $\alpha - \beta$ is invertible, and \\
(b) $S\cap \mathcal{R}(f) =\varnothing, \ \forall f\in \boldsymbol F,$ where $S=\{w\in \mathbb{T}:w-\alpha \in \mathcal{NC}\}\cup
\{w\in \mathbb{T}:w-\beta \in \mathcal{NC}\}$ and $\mathcal{R}(f)$ denotes the range of $f.$\\
 Then $\boldsymbol F$ is a normal family in $\Omega.$
\label{FNTH}
\end{thm}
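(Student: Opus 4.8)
The plan is to reduce the bicomplex statement to the classical Fundamental Normality Test (the Montel three-point theorem: a family of holomorphic functions on a plane domain omitting two fixed values is normal) applied in each idempotent component, and then to invoke Theorem~\ref{promero} to conclude normality of $\boldsymbol F$ itself. The key observation is that the exceptional set $S$ is precisely engineered so that the two hypotheses translate into component-wise omitted-value conditions. Indeed, since $w - \alpha \in \mathcal{NC}$ means $\mathcal{P}_1(w) = \mathcal{P}_1(\alpha)$ or $\mathcal{P}_2(w) = \mathcal{P}_2(\alpha)$, the set $\{w : w - \alpha \in \mathcal{NC}\}$ is exactly $(\{\mathcal{P}_1(\alpha)\} \times_e \mathbb{C}(\bo)) \cup (\mathbb{C}(\bo) \times_e \{\mathcal{P}_2(\alpha)\})$, and similarly for $\beta$. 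So condition (b), that $\mathcal{R}(f)$ avoids $S$, forces each $f$ to avoid these four ``idempotent hyperplanes.''

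The first step is to pass to components: by Theorem~\ref{theo5}, each $f \in \boldsymbol F$ decomposes as $f(w) = f_{e1}(\mathcal{P}_1(w))\eo + f_{e2}(\mathcal{P}_2(w))\et$ with $f_{e1}, f_{e2}$ holomorphic on $\mathcal{P}_1(\Omega), \mathcal{P}_2(\Omega)$ respectively. I claim $f_{e1}$ omits the two values $\mathcal{P}_1(\alpha)$ and $\mathcal{P}_1(\beta)$: if $f_{e1}(\mathcal{P}_1(w)) = \mathcal{P}_1(\alpha)$ for some $w \in \Omega$, then $\mathcal{P}_1(f(w)) = \mathcal{P}_1(\alpha)$, i.e. $f(w) - \alpha \in \mathcal{NC}$, so $f(w) \in S$, contradicting (b). The same argument handles $\mathcal{P}_1(\beta)$, and symmetrically $f_{e2}$ omits $\mathcal{P}_2(\alpha)$ and $\mathcal{P}_2(\beta)$. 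Next I need these two omitted values to be \emph{distinct} so that the classical three-point theorem applies: $\mathcal{P}_1(\alpha) = \mathcal{P}_1(\beta)$ would mean $\mathcal{P}_1(\alpha - \beta) = 0$, i.e. $\alpha - \beta \in \mathcal{NC}$, contradicting the invertibility hypothesis (a); likewise $\mathcal{P}_2(\alpha) \neq \mathcal{P}_2(\beta)$. So in each component we have a family of holomorphic functions on a plane domain omitting two fixed distinct complex values.

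Now apply the classical Fundamental Normality Test in $\mathbb{C}(\bo)$: the family $\boldsymbol F_{e1} = \mathcal{P}_1(\boldsymbol F) = \{f_{e1} : f \in \boldsymbol F\}$ is normal in $\mathcal{P}_1(\Omega)$, and $\boldsymbol F_{e2} = \mathcal{P}_2(\boldsymbol F)$ is normal in $\mathcal{P}_2(\Omega)$, both with respect to the chordal metric (normality of a holomorphic family with respect to the Euclidean metric coincides with normality with respect to the chordal metric, as noted after Corollary~\ref{meroinft}). By Theorem~\ref{promero}, normality of both component families $\mathcal{P}_i(\boldsymbol F)$ in $\mathcal{P}_i(\Omega)$ is equivalent to normality of $\boldsymbol F$ in $\Omega$ with respect to the bicomplex chordal metric. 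Hence $\boldsymbol F$ is normal in $\Omega$. One caveat to address carefully: $\mathcal{P}_i(\Omega)$ need not be connected in general, but normality is a local property, so one may work on each connected component of $\mathcal{P}_i(\Omega)$ separately, or simply note that $\Omega$ may be replaced by a $\mathbb{T}$-cartesian neighborhood of any point without loss. The only genuine subtlety — and the one I would treat most carefully — is the bookkeeping that the exceptional set $S$ is precisely the preimage under $\mathcal{P}_1, \mathcal{P}_2$ of the four omitted values, together with checking that hypothesis (a) is exactly what rules out a coincidence of the two omitted values in each slot; everything else is a direct transcription through the idempotent decomposition.
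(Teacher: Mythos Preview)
Your proof is correct and follows essentially the same route as the paper: translate conditions (a) and (b) into the statement that each projection $\mathcal{P}_i(f)$ omits the two distinct values $\mathcal{P}_i(\alpha),\mathcal{P}_i(\beta)$, apply the classical Fundamental Normality Test in each component, and reassemble via the component-wise normality criterion. The only cosmetic difference is that the paper cites the holomorphic normality criterion from \cite{Sharma} directly, whereas you cite Theorem~\ref{promero} together with the equivalence of Euclidean and chordal normality for holomorphic families; your version also spells out the null-cone bookkeeping and the connectedness caveat that the paper leaves implicit.
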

\begin{proof} Conditions $(a)$ and $(b)$ of the hypothesis imply that for each $f\in \boldsymbol F$ the projection
$P_i(f)$ does not assume $P_i(\alpha)$ and $P_i(\beta)$, where $P_i(\alpha)\neq P_i(\beta)$, for $i=1,2$. Then by the fundamental normality test for holomorphic functions (see \cite{Schiff}), it follows that $P_i(\boldsymbol F)$ is normal in $P_i(\Omega)$ for $i=1,2.$ Now by Theorem 11 of \cite{Sharma} we conclude that $\boldsymbol F$ is normal in $\Omega.$
\end{proof}

\smallskip
Following \cite{Rochon2}, one can easily obtain a bicomplex version of the Picard's Little Theorem for meromorphic functions.
\begin{thm} Let $f$ be a bicomplex meromorphic function in $\mathbb {T}$. Suppose there exist $\alpha, \ \beta, \ \gamma \in \mathbb {T}$ such that\\
(a) $\alpha - \beta, \ \beta - \gamma, \ \gamma -\alpha$ are invertible, and\\
(b)  $S\cap \mathcal{R}(f) =\varnothing, \ \forall f\in \boldsymbol F,$ where $S=\{w\in \mathbb{T}:w-\alpha \in \mathcal{NC}\}\cup \{w\in \mathbb{T}:w-\beta \in \mathcal{NC}\}\cup \{w\in \mathbb{T}:w-\gamma \in \mathcal{NC}\}$ and $\mathcal{R}(f)$ denotes the range of $f.$\\
  Then $f$ is a constant function.
\end{thm}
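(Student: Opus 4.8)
The plan is to push everything through the idempotent decomposition and then quote the classical Picard little theorem for meromorphic functions of one complex variable. First I would apply Theorem~\ref{mero1} to the bicomplex meromorphic function $f$ on $\mathbb{T}$: since $\mathcal{P}_1(\mathbb{T})=\mathcal{P}_2(\mathbb{T})=\mC(\bo)$, there are meromorphic functions $f_{e1},f_{e2}\colon\mC(\bo)\to\mC(\bo)$ with
$$f(w)=f_{e1}(\mathcal{P}_1(w))\eo+f_{e2}(\mathcal{P}_2(w))\et,\qquad w\in\mathbb{T},$$
so that $\mathcal{P}_i(f(w))=f_{ei}(\mathcal{P}_i(w))$ for $i=1,2$. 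Next I would reformulate hypothesis (a): for $u=z_1+z_2\bt\in\mathbb{T}$ one has $\mathcal{P}_1(u)\mathcal{P}_2(u)=z_1^2+z_2^2=CN(u)$, so $u$ is invertible iff $\mathcal{P}_1(u)\neq 0$ and $\mathcal{P}_2(u)\neq 0$. Applying this to $\alpha-\beta$, $\beta-\gamma$ and $\gamma-\alpha$ shows that, for each $i=1,2$, the three points $\mathcal{P}_i(\alpha),\mathcal{P}_i(\beta),\mathcal{P}_i(\gamma)$ are pairwise distinct elements of $\mC(\bo)$.

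The main step is to show that hypothesis (b) forces $f_{ei}$ to omit all three values $\mathcal{P}_i(\alpha),\mathcal{P}_i(\beta),\mathcal{P}_i(\gamma)$, for $i=1,2$. I would argue by contradiction: suppose, say, that $f_{e1}(\zeta_0)=\mathcal{P}_1(\alpha)$ for some $\zeta_0\in\mC(\bo)$. Because $\mathcal{P}_1(\alpha)\in\mC(\bo)$ is finite, $\zeta_0$ is not a pole of $f_{e1}$; and since the poles of $f_{e2}$ are isolated, I can pick $\eta_0\in\mC(\bo)$ which is not a pole of $f_{e2}$. Put $w_0:=\zeta_0\eo+\eta_0\et$, so that $w_0\in\mathbb{T}$ and $f(w_0)=\mathcal{P}_1(\alpha)\eo+f_{e2}(\eta_0)\et\in\mathbb{T}$. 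Then $\mathcal{P}_1(f(w_0)-\alpha)=0$, hence $CN(f(w_0)-\alpha)=0$, i.e. $f(w_0)-\alpha\in\mathcal{NC}$, so $f(w_0)\in S\cap\mathcal{R}(f)$, contradicting (b). Running the same argument with $\beta$ and $\gamma$ in place of $\alpha$, and with the two idempotent components interchanged, establishes the claim.

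To conclude, each $f_{ei}$ is a meromorphic function on $\mC(\bo)\cong\mathbb{C}$ whose range misses three distinct values of the Riemann sphere; by the classical Picard little theorem for meromorphic functions (equivalently, a non-constant meromorphic function omits at most two values of $\overline{\mC(\bo)}$) it must be constant, say $f_{ei}\equiv c_i$. Hence $f(w)=c_1\eo+c_2\et$ is constant on $\mathbb{T}$.

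I expect the only genuinely delicate point to be the contradiction step: one must choose the preimage $w_0$ of $\zeta_0$ so that $f(w_0)$ actually lies in $\mathbb{T}$ rather than in $I_{\infty}$, because $S$ is by definition a subset of $\mathbb{T}$; this is precisely why $\eta_0$ is taken away from the poles of $f_{e2}$ (and symmetrically in the other component). Everything else is a routine translation through the projections $\mathcal{P}_1,\mathcal{P}_2$ together with the one-variable Picard theorem.
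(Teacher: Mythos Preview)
Your proof is correct and follows essentially the same route the paper intends: the paper does not give a detailed proof here but simply says the result is obtained ``Following \cite{Rochon2}'', and the surrounding proofs (Theorem~\ref{FNTH} and the subsequent Fundamental Normality Test) use exactly the projection-to-components argument you wrote out, reducing hypothesis~(a) to pairwise distinctness of $\mathcal{P}_i(\alpha),\mathcal{P}_i(\beta),\mathcal{P}_i(\gamma)$ and hypothesis~(b) to omission of these values by $f_{ei}$, then invoking the classical one-variable theorem. Your care in choosing $\eta_0$ away from the poles of $f_{e2}$ so that $f(w_0)\in\mathbb{T}$ (and hence genuinely lies in $\mathcal{R}(f)\subset\mathbb{T}$) is the right observation; the paper's terse treatment does not spell this out.
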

\begin{thm} Let $\boldsymbol F$ be a family of bicomplex meromorphic functions defined in a domain $\Omega \subset \mathbb{T}.$ Suppose there exist $\alpha, \ \beta, \ \gamma \in \mathbb {T}$ such that\\
(a) $\alpha - \beta, \ \beta - \gamma, \ \gamma -\alpha$ are invertible, and\\
(b)  $S\cap \mathcal{R}(f) =\varnothing, \ \forall f\in \boldsymbol F,$ where $S=\{w\in \mathbb{T}:w-\alpha \in \mathcal{NC}\}\cup \{w\in \mathbb{T}:w-\beta \in \mathcal{NC}\}\cup \{w\in \mathbb{T}:w-\gamma \in \mathcal{NC}\}$ and $\mathcal{R}(f)$ denotes the range of $f.$\\
Then $\boldsymbol F$ is normal in $\Omega.$
\end{thm}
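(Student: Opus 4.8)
The plan is to transfer both hypotheses to the two idempotent components via the projections $\mathcal{P}_1,\mathcal{P}_2$, apply the classical Fundamental Normality Test for meromorphic functions to each component family, and conclude with Theorem~\ref{promero}. This is the meromorphic analogue of the route taken in Theorem~\ref{FNTH}.

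The key algebraic observation is the identity $CN(w) = \mathcal{P}_1(w)\,\mathcal{P}_2(w)$, valid since $(z_1 - z_2\bo)(z_1 + z_2\bo) = z_1^2 + z_2^2 = CN(w)$. Hence $w \in \mathcal{NC}$ if and only if $\mathcal{P}_1(w) = 0$ or $\mathcal{P}_2(w) = 0$, and therefore $w - \alpha \in \mathcal{NC}$ if and only if $\mathcal{P}_1(w) = \mathcal{P}_1(\alpha)$ or $\mathcal{P}_2(w) = \mathcal{P}_2(\alpha)$, and likewise for $\beta$ and $\gamma$. Thus hypothesis (b) --- that $f(w) \notin S$ for all $f \in \boldsymbol F$ and all $w$ in the domain --- translates into: $f_{e1} = \mathcal{P}_1(f)$ omits each of $\mathcal{P}_1(\alpha),\mathcal{P}_1(\beta),\mathcal{P}_1(\gamma)$ on $\mathcal{P}_1(\Omega)$, and $f_{e2} = \mathcal{P}_2(f)$ omits each of $\mathcal{P}_2(\alpha),\mathcal{P}_2(\beta),\mathcal{P}_2(\gamma)$ on $\mathcal{P}_2(\Omega)$, for every $f \in \boldsymbol F$.

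Using the same identity, hypothesis (a) --- invertibility of $\alpha - \beta$, $\beta - \gamma$, $\gamma - \alpha$, i.e. $CN$ of each being nonzero --- forces $\mathcal{P}_i(\alpha),\mathcal{P}_i(\beta),\mathcal{P}_i(\gamma)$ to be three \emph{distinct} points of $\mathbb{C}(\bo)$ for $i = 1,2$. Consequently, for each $i$, the family $\mathcal{P}_i(\boldsymbol F)$ consists of meromorphic functions on the domain $\mathcal{P}_i(\Omega)$ (open and connected, being the continuous open image of the domain $\Omega$), each omitting the same three distinct values of $\overline{\mathbb{C}(\bo)}$. By the classical Fundamental Normality Test for meromorphic functions (see \cite{Schiff}), $\mathcal{P}_i(\boldsymbol F)$ is normal in $\mathcal{P}_i(\Omega)$ for $i = 1,2$, and then Theorem~\ref{promero} gives that $\boldsymbol F$ is normal in $\Omega$ with respect to the bicomplex chordal metric.

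The only step requiring real care is the translation of hypothesis (b): one must check that avoidance of the full union $S$ of null-cone translates at $\alpha,\beta,\gamma$ really is equivalent to genuine three-point omission for \emph{each} projected function separately, rather than to some weaker mixed condition. (This is where openness of $\Omega$ enters, so that the $\mathcal{P}_2$-fibre over any point of $\mathcal{P}_1(\Omega)$ is a nonempty open set and hence meets the complement of the discrete pole set of $f_{e2}$, and symmetrically.) Beyond that bookkeeping, the argument is a direct appeal to one-variable function theory together with Theorem~\ref{promero}, with no new analytic obstacle.
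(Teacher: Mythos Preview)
Your proof is correct and follows exactly the route the paper takes: project to the two idempotent components, use (a) to get three distinct omitted values in each copy of $\mathbb{C}(\bo)$ and (b) to see that each $f_{ei}$ omits them, invoke the classical Fundamental Normality Test for meromorphic functions on each $\mathcal{P}_i(\boldsymbol F)$, and finish with Theorem~\ref{promero}. Your write-up is in fact more careful than the paper's own proof (which is a one-sentence sketch referring back to Theorem~\ref{FNTH}); in particular, your remark about the $\mathcal{P}_2$-fibre meeting the complement of the pole set of $f_{e2}$ fills in a detail the paper leaves implicit.
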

\begin{proof} Following the method of proof of Theorem \ref{FNTH} and applying Theorem \ref{promero} and
the fundamental normality test for meromorphic functions (\cite{Schiff}, Page 74) we can easily conclude that the
family $\boldsymbol F$ is normal in $\Omega$.
\end{proof}

\subsection*{Acknowledgements}
DR is grateful to the Natural Sciences and Engineering Research Council of Canada for financial support.

\end{document}